\documentclass[]{article}
\addtolength{\oddsidemargin}{-.3in}
\addtolength{\evensidemargin}{-.3in}
\addtolength{\textwidth}{0.6in}
\addtolength{\topmargin}{-.3in}
\addtolength{\textheight}{0.6in}

\usepackage{graphicx}
\usepackage{amsmath}
\usepackage{amssymb, mathrsfs}
\usepackage{amsthm}
\usepackage{pxfonts}
\usepackage{enumerate}
\usepackage{color}
\usepackage{mathdots}
\usepackage{sectsty}
\usepackage[hidelinks]{hyperref}
\usepackage{tikz}
\usepackage{caption}
\usepackage{adjustbox}
\usepackage{fancyhdr}
\usepackage{verbatim}
\usepackage{url}

\sectionfont{\scshape\centering\fontsize{11}{14}\selectfont}
\subsectionfont{\scshape\fontsize{11}{14}\selectfont}

\newcommand\shorttitle{Divisor function over values of quadratic polynomials}
\newcommand\authors{\small Kostadinka Lapkova and Nian Hong Zhou}

\fancyhf{}

\fancyhead[C]{%
\ifodd\value{page}
  \small\scshape\authors
\else
  \small\scshape\shorttitle
\fi
}
\fancyfoot[C]{\thepage}

\pagestyle{fancy}
\hypersetup{
    colorlinks=true,       
    linkcolor=blue,          
    citecolor=cyan,        
}

\theoremstyle{plain}
\newtheorem{theorem}{Theorem}[section]
\newtheorem{lemma}[theorem]{Lemma}

\theoremstyle{remark}

\makeatletter

\newcommand{\Rmnum}[1]{\expandafter\@slowromancap\romannumeral #1@}

\def\rb{\mathbb R}

\def\zb{\mathbb Z}
\def\cb{{\mathbb C}}

\def\rint{\int\limits}

\def\res{\mathop{\rm{Res}}}
\def\bmf{{\mathfrak M}}
\def\smf{{\mathfrak m}}

\numberwithin{equation}{section}

\title{\large \bf ON THE AVERAGE SUM OF THE $K$-TH DIVISOR FUNCTION OVER VALUES OF QUADRATIC POLYNOMIALS}
\author{\small KOSTADINKA LAPKOVA ~and~ NIAN HONG ZHOU}

\date{}

\begin{document}

\maketitle




\begin{abstract}Let $F({\bf x})\in\mathbb{Z}[x_1,x_2,\dots,x_n]$ be a quadratic polynomial in $n\geq 3$ variables with a nonsingular quadratic part. Using the circle method we derive an asymptotic formula for the sum
$$
\Sigma_{k,F}(X; {\mathcal{B}})=\sum_{{\bf x}\in X\mathcal{B}\cap\mathbb{Z}^{n}}\tau_{k}\left(F({\bf x})\right),
$$
for $X$ tending to infinity, where $\mathcal{B}\subset\mathbb{R}^n$ is an $n$-dimensional box such that
$\min\limits_{{\bf x}\in X\mathcal{B}}F({\bf x})\ge 0$ for all sufficiently large $X$, and $\tau_{k}(\cdot)$ is the $k$-th divisor
function for any integer $k\ge 2$.
\end{abstract}

\tableofcontents
\section{Introduction}
The $k$-th divisor function is a generalisation of the divisor function $\tau(m)=\sum_{d|m}1$ which counts the number of ways $m$
can be written as a product of $k$ positive integer numbers. It is defined as
$$
\tau_{k}(m)=\#\{(x_1,x_2,...,x_{k})\in\zb_+^{k}: m=x_1x_2...x_{k}\},
$$
where we assume that $\tau_k(0)=0$.  For polynomials $F({\bf x})\in \zb[x_1,\ldots,x_n]$ consider the sums
$$T_k(F({\bf x}),X)=\sum_{\left|F({\bf x})\right|\le X}\tau_k(\left|F({\bf x})\right|)\,.$$ Understanding the average order of $\tau_k(m)$,
as it ranges over sparse sequences of values taken by polynomials, i.e. of $T_k(F,X)$, is a problem that has received a lot of attention.
\newline

The most studied case is naturally $k=2$. For $F({\bf x})=F(x_1,x_2)$ a binary irreducible cubic form Greaves \cite{MR0263761}
showed that there exist real constants $c_1>0$ and $c_2$ depending only on $F$, such that
$$
T_2(F({\bf x}),X)= c_1X^{2/3}\log X+c_2X^{2/3} +O_{\varepsilon, F}(X^{9/14+\varepsilon}),
$$
holds for any $\varepsilon>0$ as $X\rightarrow \infty$. If $F(x_1,x_2)$ is an irreducible quartic form, Daniel \cite{MR1670278}
proved that
$$
T_2(F({\bf x}),X)= c_1X^{1/2}\log X+O_{F}(X^{1/2}\log\log X),
$$
where $c_1>0$ is a constant depending only on $F$. It seems that $\deg F=4$ is the limit of the current available methods treating
divisor sums over binary forms. More related works on the cases $k=2$ and $n=2$ are e.g. la Bret{\`{e}}che and Browning \cite{MR2719554},
Browning \cite{MR2861076} and Yu \cite{MR1754029}. On the other hand, with their paper from 2012 Guo and
Zhai \cite{GuoZhai2012} revived the interest toward estimating asymptotically $T_2(F({\bf x}),X)$ for forms in $n\geq 3$ variables
using the classical circle method. After many other papers extending \cite{GuoZhai2012} and dealing with diagonal forms,
in a recent work Liu \cite{Liu2019} obtained an asymptotic formula for $T_2(F({\bf x}),X)$ for any nonsingular quadratic form $F$
in $n\geq 3$ variables.\newline

For the cases when $k\ge 3$ there are only few results in the literature. Friedlander and Iwaniec \cite{MR2289206} showed that
$$
\sum_{\substack{n_1^2+n_2^6\leq X\\ {\rm gcd}(n_1,n_2)=1}}\tau_3(n_1^2+n_2^6)=cX^{2/3}(\log X)^2
+O\left(X^{2/3}(\log X)^{7/4}(\log \log X)^{1/2}\right),
$$
where $c$ is a constant. Daniel \cite[(4.5)]{DanielThesis} described an asymptotic formula for $T_k(F({\bf x}),X)$ as $X\rightarrow\infty$ for any $k\geq 2$ for irreducible binary definite quadratic forms $F$ and in \cite[(4.7)]{DanielThesis} he proved an asymptotic formula for $T_3(F({\bf x}),X)$ as $X\rightarrow\infty$
for irreducible binary cubic forms $F$. Sun and Zhang \cite{SunZhang2016}, with the help of the circle method, obtained
\[
\sum_{1\le m_1, m_2, m_3\le X}\tau_3\left(m_1^2+m_2^2+m_3^2\right)=c_1X^3\log^2 X+c_2X^3\log X+c_3X^3+O_{\varepsilon}(X^{11/4+\varepsilon}),
\]
where $c_1, c_2, c_3$ are constants and $\varepsilon$ is any positive number. Finally Blomer \cite{Blomer} proved an asymptotic formula
for the sum $\Sigma_{k,F}(X; \mathcal{B})$ defined in \eqref{defSigma}, for any $k\geq 2$, where $F({\bf x})$ is a form of degree $k$ in $n=k-1$ variables, coming from incomplete norm form.
\newline

 In this paper we investigate the average sum of the $k$-th divisor function over values of quadratic polynomials $F(\bf x)$, not
 necessarily homogenous, in $n\ge 3$ variables for \emph{any} $k\ge 2$. Every $n$-variables quadratic polynomial can be written as
\begin{equation}\label{def:F}
F({\bf x})={\bf x}^TQ{\bf x}+{\bf L}^T{\bf x}+N\,
\end{equation}
where $Q\in\zb^{n\times n}$ is a symmetric matrix, ${\bf L}\in\zb^n$ and $N\in\zb$. Our only additional requirement is that $Q$ is nonsingular. Let $\mathcal{B}\subset\mathbb{R}^n$ be an $n$-dimensional box (i.e. a certain product of intervals)
such that $\min_{{\bf x}\in X\mathcal{B}}F({\bf x})\ge 0$ for all sufficiently large $X$,
and for each integer $k\ge 2$, consider the sum
\begin{equation}\label{defSigma}
\Sigma_{k,F}(X; \mathcal{B})=\sum_{{\bf x}\in X\mathcal{B}\cap\mathbb{Z}^{n}}\tau_{k}\left(F({\bf x})\right),
\end{equation}
as $X$ tends to infinity.
Let us also use the following notation for $q\in\zb_{+}$
\[\varrho_F(q)=\frac{1}{q^{n-1}}\#\{{\bf h}\pmod q: F({\bf h})\equiv 0\bmod q\}.\]

Our main result is the following.
\begin{theorem}\label{mth} Let $F({\bf x})$ and $\Sigma_{k,F}(X; \mathcal{B})$ be defined as in \eqref{def:F} and \eqref{defSigma},
respectively, where $Q$ is a nonsingular matrix. Then for any $\varepsilon>0$ there exist real constants $C_{k,0}(F)$, $C_{k,1}(F)$,..., and $C_{k,k-1}(F)$,
such that for $X$ tending to infinity we have the asymptotic formula
$$\Sigma_{k,F}(X; \mathcal{B})=\sum_{r=0}^{k-1}C_{k,r}(F)\int_{X\mathcal{B}}(\log F({\bf t}))^r\,d{\bf t}
+O\left(X^{n-\frac{n-2}{n+2}\min\left(1,\frac{4}{k+1}\right)+\varepsilon}\right),$$
where the implied constant depends on $F$, $k$, $\mathcal{B}$ and $\varepsilon$, and
$$C_{k,r}(F)=\frac{1}{r!}\sum_{t=0}^{k-r-1}\frac{1}{t!}
\left({\frac{\,d^tL(s;k,F)}{\,ds^t}}\bigg|_{s=1}\right) \res_{s=1}\left((s-1)^{r+t}\zeta(s)^k\right).$$
The function $L(s; k,F)$ has the Euler product presentation
$$L(s;k,F)=\prod_{p}\left(\sum_{\ell\ge 0}\frac{\varrho_F(p^{\ell})\left(\tau_{k}(p^{\ell})-p^{s-1}\tau_{k}(p^{\ell-1})\right)}{p^{\ell s}}\right)
\left(\frac{(1-p^{-s})^k}{1-p^{-1}}\right)\,,$$
with $\tau_{k}(x):=0$ for all $x\not\in\zb$, and it is absolutely convergent for all $\Re(s)>1/2$. In particular, the main term has a positive leading coefficient:
$$C_{k,k-1}(F)=\frac{1}{(k-1)!}
\prod_{p}\left(\sum_{\ell\ge 0}\frac{\varrho_F(p^{\ell})\tau_{k-1}(p^{\ell})}{p^{\ell }}\right)\left(1-\frac{1}{p}\right)^{k-1}>0.$$
\end{theorem}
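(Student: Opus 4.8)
The plan is to attack $\Sigma_{k,F}(X;\mathcal{B})$ by first reducing the $k$-th divisor function to a convenient bilinear (hyperbola-type) shape and then feeding the inner counting problem into the circle method. Writing $\tau_k = \tau_{k-1} * 1$, we split $\tau_k(F(\mathbf{x})) = \sum_{de = F(\mathbf{x})} \tau_{k-1}(d)$ and dissect the ranges of $d$ and $e$ dyadically; by symmetry it suffices to treat the portion with $d \le e$, say $d \le F(\mathbf{x})^{1/2}$. Fixing $d$ in a dyadic block $d \asymp D$ with $D \le X^{1+\varepsilon}$, the remaining sum counts $\mathbf{x} \in X\mathcal{B}$ with $F(\mathbf{x}) \equiv 0 \pmod d$, weighted by $\tau_{k-1}(d)$. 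For each such $d$ one is led to estimate
$$
\sum_{\substack{\mathbf{x}\in X\mathcal{B}\cap\mathbb{Z}^n \\ d \mid F(\mathbf{x})}} 1,
$$
which is exactly the kind of quantity the circle method handles: one detects the congruence $F(\mathbf{x}) = d m$ by integrating $e(\alpha(F(\mathbf{x}) - dm))$ over $\alpha \in [0,1)$ (or, more efficiently, by opening the divisor condition with additive characters $\frac{1}{d}\sum_{a \bmod d} e(aF(\mathbf{x})/d)$ and then estimating the resulting exponential sum over the box). Because $Q$ is nonsingular, the complete quadratic exponential sum $\sum_{\mathbf{x} \bmod q} e(aF(\mathbf{x})/q)$ is a Gauss sum of size $q^{n/2}$ with the usual multiplicativity, and Poisson summation in $\mathbf{x}$ over the box $X\mathcal{B}$ converts the sum into a main term of size $X^n \varrho_F(d)/d^{\,?}$ plus an error controlled by the dual sum; the singular series and singular integral assemble exactly into the local densities $\varrho_F(q)$ appearing in the statement.

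Granting (as we may, since it is stated in the excerpt) the resulting asymptotic for the congruence sum, the next step is purely arithmetic bookkeeping. One sums the main terms over $d \asymp D$, then over dyadic blocks, and recombines with the symmetric $e \le d$ range, at each stage replacing $\sum_{d \le Y} \tau_{k-1}(d)\varrho_F(d)/d$ and its variants by their asymptotic expansions in powers of $\log Y$. The generating Dirichlet series of $\tau_k(F(\mathbf{x}))$-type weights factors, after removing the $\zeta(s)^k$ pole structure, into the Euler product $L(s;k,F) = \prod_p \big(\sum_{\ell\ge0} \varrho_F(p^\ell)(\tau_k(p^\ell) - p^{s-1}\tau_k(p^{\ell-1}))p^{-\ell s}\big)\big((1-p^{-s})^k/(1-p^{-1})\big)$; one checks termwise, using $\varrho_F(p^\ell) = 1 + O(p^{-1/2})$ (a consequence of nonsingularity of $Q$, via the Gauss-sum evaluation of the local density) and $\tau_k(p^\ell) - p^{s-1}\tau_k(p^{\ell-1}) \ll p^{\ell(1-\Re s)+\varepsilon}$, that each local factor is $1 + O(p^{-2\Re s + \varepsilon} + p^{-1/2-\varepsilon}\cdot\text{something})$ — more carefully, the design of the correction factor is exactly what cancels the would-be pole at $s=1$ coming from $\zeta(s)^k$, leaving convergence for $\Re s > 1/2$. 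A contour shift / Perron-type argument (or direct partial summation, since we only need finitely many terms of the expansion) then produces the polynomial $\sum_{r=0}^{k-1} C_{k,r}(F)(\log F(\mathbf{t}))^r$ integrated against $d\mathbf{t}$ over $X\mathcal{B}$, with the coefficients $C_{k,r}(F)$ emerging from the Laurent expansion of $\zeta(s)^k L(s;k,F)$ at $s=1$ — the displayed formula for $C_{k,r}(F)$ is precisely the convolution of the Taylor coefficients of $L(s;k,F)$ at $s=1$ with the residues $\res_{s=1}((s-1)^{r+t}\zeta(s)^k)$, and the leading term $C_{k,k-1}(F) = \frac{1}{(k-1)!} L(1;k,F)$ unwinds to the stated positive Euler product once one notes $\tau_k(p^\ell) - \tau_k(p^{\ell-1}) = \tau_{k-1}(p^\ell)$ at $s=1$.

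The main obstacle, and the source of the error term $X^{n - \frac{n-2}{n+2}\min(1,4/(k+1)) + \varepsilon}$, is controlling the error in the circle-method estimate \emph{uniformly in the modulus $d$} and then summing those errors over all $d \le X^{1+\varepsilon}$ without losing a power of $X$. For a single modulus $d$ the error from the minor arcs (or from the dual/off-diagonal terms after Poisson) is roughly of shape $d^{-1}X^n$ times a negative power of $X/d^{1/?}$ coming from the decay of the Gauss sum of size $d^{n/2}$ against the box of sidelength $X$; the break-even point $d \asymp X^{n/(n+?)}$ dictates when the main term dominates, and for $d$ beyond that one must use the minor-arc bound. Balancing the two regimes — large $d$, where we only have a crude bound, against the dyadic tail $\sum_{d \asymp D}\tau_{k-1}(d)$ which grows like $D(\log D)^{k-2}$ — is where the exponent $\frac{n-2}{n+2}$ (the quality of the quadratic exponential-sum estimate over a box, essentially square-root cancellation combined with the $n$-dimensional volume) and the $k$-dependent factor $\min(1,4/(k+1))$ (reflecting that for large $k$ one can afford to truncate the $d$-sum earlier, at $d \le X^{4/(k+1)\cdot(\text{something})}$, because $\tau_{k-1}(d)$ on the complementary range $F(\mathbf{x})^{1/2} < d$ is negligible by a trivial bound once $d$ is large relative to the admissible size) both enter. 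Handling the very large divisors $d$ close to $F(\mathbf{x})$ separately, via the trivial bound $\tau_k(m) \ll_\varepsilon m^\varepsilon$ applied to the $O(X^{n-1+\varepsilon})$ values with $F(\mathbf{x})$ unusually smooth or with a large prime factor in an awkward range, is a delicate point that needs a clean combinatorial lemma rather than brute force.
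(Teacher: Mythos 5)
Your proposal takes a genuinely different route from the paper. The paper never opens the divisor condition; it works directly with the generating exponential sum $T(\alpha,Y)=\sum_{m\le Y}\tau_k(m)e(m\alpha)$, pairs it with $S(\alpha)=\sum_{\mathbf{x}\in X\mathcal{B}}e(F(\mathbf{x})\alpha)$ over the unit interval, and feeds in Smith's asymptotic for $\sum_{m\le x,\,m\equiv h\bmod q}\tau_k(m)$ valid for $q\le x^{2/(k+1)}$ (together with Matsumoto's bound on $D_k(0;h,q)$) as the key arithmetic input. In particular the $\min(1,4/(k+1))$ in the exponent comes precisely from Smith's level of distribution $q\le x^{2/(k+1)}$ applied at $x\asymp X^2$; it has nothing to do with truncating a $d$-sum because $\tau_{k-1}$ on the complementary range is ``negligible'' (it isn't --- on average it is a degree-$(k-2)$ polynomial in $\log$, which does not produce a sharp threshold of the form $4/(k+1)$). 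That part of your error analysis does not match the actual mechanism.

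More importantly there is a concrete gap at the very first step. You write $\tau_k=\tau_{k-1}*1$, so $\tau_k(F(\mathbf{x}))=\sum_{de=F(\mathbf{x})}\tau_{k-1}(d)$, and then claim ``by symmetry it suffices to treat $d\le e$.'' This convolution is \emph{not} symmetric: the two halves are $\sum_{d\le\sqrt{m},\,d\mid m}\tau_{k-1}(d)$ and $\sum_{e<\sqrt{m},\,e\mid m}\tau_{k-1}(m/e)$, and the second is not obtained from the first by a change of variable. A correct hyperbola treatment for general $k$ requires the full symmetric decomposition $\tau_k(m)=\#\{d_1\cdots d_k=m\}$ with the observation that some $d_i\le m^{1/k}$, plus an inclusion--exclusion to avoid double counting; this is substantially more combinatorially involved, and is exactly what the paper avoids by staying on the Fourier side. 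Finally, even granting the decomposition, you correctly identify that the real difficulty is the uniformity of the circle-method congruence count in $d$ up to $X^{1+\varepsilon}$ and the balancing against the trivial bound for large $d$, but you explicitly leave this unresolved (``a delicate point that needs a clean combinatorial lemma''). That is the heart of the matter, so as written the argument is incomplete on both the opening decomposition and the closing error estimate.
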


First of all, we remark that since $F({\bf x})$ has a nonsingular quadratic part, the set of all zeros of $F({\bf x})=0$
has a Lebesgue measure $0$, so that the logarithm function in
the integrals in the terms is well defined.
 Note that we provide a formula with $k$ terms, where one can easily see that the main term is of magnitude
 $X^n(\log X)^{k-1}$ (when $r=k-1$) and the last secondary term is of magnitude $X^n$ (when $r=0$). Thus the error term is indeed of a smaller
 rate. \newline

Using Theorem \ref{mth}, one can get the asymptotic formula for $\Sigma_{2,F}(X,\mathcal{B})$ in the most studied case of $k=2$.
This recreates and extends the main Theorem of Liu \cite{Liu2019} also for non-homogenous quadratic polynomials, but also provides
different expressions for the coefficients. Naturally, they can be also computed explicitly for specific polynomials, a goal we have
 not pursued in the current paper. Theorem \ref{mth} also extends the formula \cite[(4.5)]{DanielThesis} of Daniel to quadratic polynomials in more than $2$ variables, further, it elucidates the form of the involved coefficients.

\paragraph{Notations.}
The symbols  $\mathbb{Z}_+$, $\mathbb{Z}$ and $\mathbb{R}$ denote the positive integers, the integers and the real numbers, respectively.
$e(z):=e^{2\pi i z}$, $\zeta(s)=\sum_{n\ge 1}n^{-s}$ is the Riemann zeta function, the letter $p$ always denotes a prime.
We make use of the $\varepsilon$-convention: whenever $\varepsilon$ appears in a statement, it is asserted that the statement is true for all real $\varepsilon $.
This allows us to write $x^{\varepsilon}\log x\ll x^{\varepsilon}$ and $x^{2\varepsilon}\ll x^{\varepsilon}$, for example. Furthermore,
if not specially specified, all the implied constants of this paper in $O$ and $\ll$ depend on $F$, $k$, $\mathcal{B}$ and $\varepsilon$.\newline

\section{The proof of Theorem \ref{mth}}
\subsection{Setting up the circle method}

The primary technique used in the proof of the main theorem is the circle method and more precisely its treatment by Pleasants \cite{Pleas}.
The recent work on quadratic forms in $n\ge 3$ variables of Liu \cite{Liu2019} uses the same circle method techniques,
i.e. Weyl differencing, that were already used for general quadratic multivariable polynomials by Pleasants.

\par For the real $X$ from the definition \eqref{defSigma} let $L\ll X$ be a positive real parameter which we will choose later in a suitable way,
let $a,q\in\zb$, $0\le a<q\le L$ and $ {\rm gcd}(a,q)=1$. Then we define the intervals
$$\bmf_{a,q}(L):=\left\{\alpha\in\left[0,1\right]: \left|\alpha - \frac{a}{q}\right|\le \frac{L}{qX^2}\right\}.$$
The set of the major arcs is then the union
\begin{equation}\label{defMajor}
\bmf(L)=\bigsqcup_{\substack{0\le a<q\le L\\ {\rm gcd}(a,q)=1}}\bmf_{a,q}(L),
\end{equation}
and the set of the minor arcs is the complement $\smf(L)=\left[0,1\right]\setminus \bmf(L)$.

\par We further define the following exponential sums for $\alpha\in\rb$
$$S(\alpha)=\sum_{{\bf x}\in X\mathcal{B}\cap\zb^n}e\left(F({\bf x})\alpha\right)$$
and
$$T(\alpha,Y)=\sum_{0\le m\le Y}\tau_k(m)e(m\alpha). $$

Then, by the well-known identity for $u\in\zb$
\[\int_0^1 e(u\alpha)\,d\alpha=\left\{\begin{array}{lll}
1 & \text{if} &u=0,\\
0 & \text{if} &u\neq 0,
\end{array}\right.\]

we have
\begin{align*}
\Sigma_{k,F}(X; \mathcal{B})&=\sum_{{\bf x}\in X\mathcal{B}\cap\zb^{n}}\tau_{k}\left(F({\bf x})\right)\\
&=\int_{0}^{1}S(\alpha)T(-\alpha, C_{F,\mathcal{ B}}(X))\,d\alpha\\
&=\int_{\bmf(L)}S(\alpha)T(-\alpha, C_{F,\mathcal{ B}}(X))\,d\alpha+\int_{\smf(L)}S(\alpha)T(-\alpha, C_{F,\mathcal{ B}}(X))\,d\alpha\\
&=I_{\bmf(L)}+I_{\smf(L)},
\end{align*}
where
$$C_{F,\mathcal{ B}}(X):=\max_{{\bf x}\in X\mathcal{B}}|F({\bf x})|=X^2\max_{{\bf x}\in \mathcal{B}}\left|{\bf x}^TQ{\bf x}\right|+O(X)\asymp X^2.$$
We shall prove in Section \ref{ssmin} that for the contribution from the minor arcs we have
\begin{equation}\label{estm1}
I_{\smf(L)}\ll X^{n+\varepsilon}L^{1-n/2},
\end{equation}
as long as $L\ll X$. Already here we see that we need to require that the number of variables satisfy $n\geq 3$ in order to have an
error term of a smaller magnitude than $O(X^n)$.
Further, in Section \ref{ssmar} we will show that
\begin{equation}\label{estm2}
I_{\bmf(L)}-\sum_{r=0}^{k-1}C_{k,r}(F)\int\limits_{X\mathcal{B}}\left(\log(F({\bf t}))\right)^r\,d {\bf t}
\ll X^{n+\varepsilon}\left( L^{1-n/2}+L^2X^{-\min\left(1,\frac{4}{k+1}\right)}\right).
\end{equation}
Here for $r=0,1,\dots,k-1$,
\begin{equation}\label{def:Ckr}
C_{k,r}(F)=\sum_{q=1}^{\infty}\beta_{k,r}(q)S_F(q)\,,
\end{equation}
where
\begin{equation}\label{def:S_F}
S_F(q)=\sum_{\substack{a\in[1,q]\cap\zb\\ \gcd(a,q)=1}}q^{-n}\sum_{{\bf h}\in [1,q]^n\cap\zb^{n}}e\left(\frac{a}{q}F({\bf h})\right),
\end{equation}
$$\beta_{k,r}(q)=\frac{1}{r!}\sum_{t=0}^{k-r-1}\frac{1}{t!}\res_{s=1}\left((s-1)^{r+t}\zeta(s)^k\right)\left(\frac{\,d^t \Phi_k(q,s)}{\,ds^t}\bigg|_{s=1}\right),$$
and the analytic function $\Phi_k(q,s)$ is defined by Lemma \ref{t22}. We further consider the function
\begin{equation}\label{def:L}
L(s; k,F)=\sum_{q\ge 1}\Phi_k(q,s)S_F(q)\,.
\end{equation}
In Subsection \ref{lemss} we prove that it satisfies
\begin{equation}\label{eqFL}
L(s;k,F)=\prod_{p}\left(\sum_{\ell\ge 0}\frac{\varrho_F(p^{\ell})\left(\tau_{k}(p^{\ell})-p^{s-1}\tau_{k}(p^{\ell-1})\right)}{p^{\ell s}}\right)\left(\frac{(1-p^{-s})^k}{1-p^{-1}}\right)\,,
\end{equation}
with $\tau_{k}(x):=0$ for all $x\not\in\zb$.

Then Theorem \ref{mth} follows from \eqref{estm1},  \eqref{estm2} and \eqref{eqFL}, after choosing $L=X^{\frac{2}{n+2}\min\left(1, \frac{4}{k+1}\right)}$.


\subsection{Contribution from the minor arcs}\label{ssmin}~
Clearly, if the positive real numbers $L$ and $L'$ satisfy $L\le L'$, then $\bmf(L)\subset \bmf(L')$, and if $L\ge X$, then $[0, 1]\subset  \bmf(L)$
follows from Dirichlet's approximation theorem.

We further define
$$\mathscr{F}(L)=\bmf(2L)\setminus \bmf(L).$$
Then for a given positive number $L<X/2$,
$$[0, 1]\subset\bmf(L)\sqcup\bigsqcup_{0\le j<N}  \mathscr{F}(2^jL),$$
where $N$ is the smallest integer greater than or equal to $ (\log (X/L))/\log 2$.
 Clearly, the set of the small arcs then satisfy
\begin{equation}\label{mind}
\smf(L)\subset \bigsqcup_{0\le j<N}  \mathscr{F}(2^jL).
\end{equation}
To prove the estimate \eqref{estm1} over the minor arcs, we would use separate estimates of the two components $S(\alpha)$ and $T(\alpha,X)$
when $\alpha\in\mathscr{F}(L)$. We first state the following result.
\begin{lemma}\label{lemqe}For all positive numbers $L\ll X$,
$$\sup_{\alpha\in \mathscr{F}(L)}|S(\alpha)|\ll X^{n+\varepsilon}L^{-n/2}.$$
\end{lemma}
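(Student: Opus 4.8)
The plan is to estimate the exponential sum $S(\alpha)=\sum_{{\bf x}\in X\mathcal{B}\cap\zb^n}e(F({\bf x})\alpha)$ by Weyl differencing, following the treatment of quadratic polynomials in many variables by Pleasants \cite{Pleas}. First I would apply the squaring/differencing step: writing $|S(\alpha)|^2=\sum_{{\bf x},{\bf y}}e((F({\bf x})-F({\bf y}))\alpha)$ and substituting ${\bf y}={\bf x}+{\bf z}$, one uses that $F({\bf x}+{\bf z})-F({\bf x})=2{\bf z}^TQ{\bf x}+(\text{terms depending only on }{\bf z})$ is \emph{linear} in ${\bf x}$ with linear form $2Q{\bf z}$. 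Hence for each fixed ${\bf z}$ in a box of side $\ll X$, the inner sum over ${\bf x}$ factors as a product of $n$ one-dimensional geometric sums, each bounded by $\min(X,\|2(Q{\bf z})_i\alpha\|^{-1})$ where $\|\cdot\|$ denotes distance to the nearest integer. This gives
$$
|S(\alpha)|^2\ll \sum_{{\bf z}\in X\mathcal{B}'\cap\zb^n}\ \prod_{i=1}^n\min\bigl(X,\ \|2(Q{\bf z})_i\,\alpha\|^{-1}\bigr),
$$
for a suitable dilated box $\mathcal{B}'$.

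The next step is the standard reduction of such a sum to a count of lattice points with small denominators. Since $Q$ is nonsingular over $\zb$, the linear map ${\bf z}\mapsto Q{\bf z}$ is injective and has bounded-index image, so summing over ${\bf z}$ in a box of side $\asymp X$ is, up to a bounded factor and a harmless change of the constant, the same as summing over each coordinate $u_i=(Q{\bf z})_i$ ranging over an arithmetic-progression-like set in an interval of length $\asymp X$; I would absorb this via the usual divisor-bounded fibre argument, costing only $X^\varepsilon$. One then invokes the classical inequality (see e.g. Pleasants, or Vaughan's book): for a rational approximation $|\alpha-a/q|\le 1/q^2$ with $\gcd(a,q)=1$,
$$
\sum_{0\le u\le X}\min\bigl(X,\ \|2u\alpha\|^{-1}\bigr)\ \ll\ \Bigl(\frac{X}{q}+1\Bigr)(q+X\log q)\ \ll\ X^{1+\varepsilon}\Bigl(\frac{X}{q}+1+\frac{q}{X}\Bigr).
$$
Applying this coordinate by coordinate and multiplying gives $|S(\alpha)|^2\ll X^{n+\varepsilon}\bigl(X/q+1+q/X\bigr)^n$, hence
$$
|S(\alpha)|\ll X^{n/2+\varepsilon}\Bigl(\frac{X}{q}+1+\frac{q}{X}\Bigr)^{n/2}.
$$

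Finally I would translate this into the statement about $\alpha\in\mathscr{F}(L)=\bmf(2L)\setminus\bmf(L)$. By Dirichlet's theorem every $\alpha\in[0,1]$ has an approximation $a/q$ with $q\le X^2/L$ (say) and $|\alpha-a/q|\le L/(qX^2)$; the condition $\alpha\notin\bmf(L)$ forces this $q$ to satisfy $q>L$, while $\alpha\in\bmf(2L)$ means $q\le 2L$ and so in particular $q\le X$. Thus $L<q\le 2L$ and $X/q<X/L$, and since $L\ll X$ the dominant term in $X/q+1+q/X$ is $X/q\asymp X/L$. Substituting $q\asymp L$ yields $|S(\alpha)|\ll X^{n/2+\varepsilon}(X/L)^{n/2}=X^{n+\varepsilon}L^{-n/2}$, which is exactly the claim. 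The main obstacle — more bookkeeping than genuine difficulty — is controlling the effect of the nonsingular but possibly non-diagonal integer matrix $Q$ in the differencing step: one must ensure that ${\bf z}$ ranging over a box produces values $(Q{\bf z})_i$ that are equidistributed enough (with multiplicities bounded by $X^\varepsilon$) for the one-dimensional estimate to apply to each coordinate, and that the cross-coordinate dependence introduced by $Q$ does not obstruct the product bound; this is handled exactly as in Pleasants \cite{Pleas} by a linear change of variables and the divisor bound.
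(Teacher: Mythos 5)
The paper disposes of this lemma by a one-line citation to Pleasants (the inequality on p.~138 of \cite{Pleas}, $|S(\alpha)|\le X^n(\log X)^nL^{-r/2}$ with $r=\mathrm{rank}\,Q=n$), so your proposal is a reconstruction of the cited argument rather than an alternative to the paper's. That said, the reconstruction is essentially correct and is indeed the standard Weyl-differencing proof: squaring, the substitution ${\bf y}={\bf x}+{\bf z}$ making the phase linear in ${\bf x}$ with coefficient vector $2Q{\bf z}$, the coordinatewise geometric-series bound, the change of variable ${\bf u}=Q{\bf z}$, the classical $\sum_u\min(X,\|u\alpha\|^{-1})$ estimate, and the Dirichlet bookkeeping. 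Two small points are worth cleaning up.

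First, your claim that the relevant denominator satisfies $L<q\le 2L$ is not justified as stated: you take the Dirichlet approximation $a/q$ with $q\le X^2/L$ and $|\alpha-a/q|\le L/(qX^2)$, and $\alpha\notin\bmf(L)$ correctly forces $q>L$, but $\alpha\in\bmf(2L)$ is witnessed by \emph{some} fraction with denominator $\le 2L$, not necessarily this Dirichlet fraction, so you may only conclude $L<q\le X^2/L$. This is harmless: over the whole range $L<q\le X^2/L$ one has $X/q+1+q/X\ll X/L$ (the expression is largest at the two endpoints, where it is $\asymp X/L$; the hypothesis $L\ll X$ guarantees $X^2/L\gg X$ so both endpoints lie on the same level set), and the final bound $|S(\alpha)|\ll X^{n/2+\varepsilon}(X/L)^{n/2}=X^{n+\varepsilon}L^{-n/2}$ is unchanged. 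Second, no divisor-bounded fibre argument is needed at the change of variable ${\bf u}=Q{\bf z}$: since $Q$ is nonsingular the map is injective, and since the summand $\prod_i\min(X,\|2u_i\alpha\|^{-1})$ is nonnegative and coordinatewise multiplicative, one simply bounds the sum over the sublattice $Q\zb^n$ inside a box of side $\ll X$ by the sum over all of $\zb^n$ in that box, which then factors into the $n$ one-dimensional sums. No extra $X^\varepsilon$ loss arises from this step.
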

\begin{proof}
This estimate was done by Pleasants \cite{Pleas} even for the range $L\ll X(\log X)^{1/4}$. In the first equation of p.138 \cite{Pleas}
he proves that for $\alpha\in \mathscr{F}(L)$ we have $$|S(\alpha)|\le X^n(\log X)^n L^{-r/2},$$ where $r\ge 3$ is the rank of $Q$,
and in our case we have assumed that $r=n$.
\end{proof}
We also need the following estimate.
\begin{lemma}\label{lemde} For all positive numbers $L\ll X$,
$$\int_{\mathscr{F}(L)}\left|T(-\alpha, C_{F,\mathcal{ B}}(X))\right|\,d\alpha\ll X^{\varepsilon}L.$$
\end{lemma}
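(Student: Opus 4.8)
The plan is to bound the integral using the Cauchy--Schwarz inequality together with Parseval's identity. The underlying reason this works is that $\mathscr{F}(L)\subseteq\bmf(2L)$ has total measure only of order $L^{2}X^{-2}$, whereas the $L^{2}$-mean of $T(-\alpha,Y)$ over the whole circle is essentially linear in $Y\asymp X^{2}$; multiplying the square roots of these two quantities reproduces the claimed bound $X^{\varepsilon}L$ essentially exactly.

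Concretely, I would proceed as follows. First, estimate the measure: since $\bmf(2L)$ is the union of the arcs $\bmf_{a,q}(2L)$ over $1\le q\le 2L$ and $0\le a<q$ with $\gcd(a,q)=1$, each of length at most $4L/(qX^{2})$, and since for each $q$ there are at most $q$ admissible residues $a$, we obtain
$$|\mathscr{F}(L)|\le|\bmf(2L)|\le\sum_{q\le 2L}q\cdot\frac{4L}{qX^{2}}\ll\frac{L^{2}}{X^{2}}.$$
Next, apply Cauchy--Schwarz on $\mathscr{F}(L)$ and then enlarge the square integral to all of $[0,1]$:
$$\int_{\mathscr{F}(L)}|T(-\alpha,Y)|\,d\alpha\le|\mathscr{F}(L)|^{1/2}\left(\int_{0}^{1}|T(-\alpha,Y)|^{2}\,d\alpha\right)^{1/2},$$
where $Y=C_{F,\mathcal{B}}(X)\asymp X^{2}$. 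By orthogonality, $\int_{0}^{1}|T(-\alpha,Y)|^{2}\,d\alpha=\sum_{0\le m\le Y}\tau_{k}(m)^{2}$, and the elementary bound $\tau_{k}(m)\ll_{k,\varepsilon}m^{\varepsilon}$ (or the sharper $\sum_{m\le Y}\tau_{k}(m)^{2}\ll Y(\log Y)^{k^{2}-1}$) gives $\ll Y^{1+\varepsilon}\ll X^{2+\varepsilon}$. Putting these together yields
$$\int_{\mathscr{F}(L)}|T(-\alpha,Y)|\,d\alpha\ll\big(L^{2}X^{-2}\big)^{1/2}\big(X^{2+\varepsilon}\big)^{1/2}=X^{\varepsilon/2}L\ll X^{\varepsilon}L.$$

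I do not expect a genuine obstacle in this lemma; the whole argument rests only on Cauchy--Schwarz, Parseval, and the size of the $k$-th divisor function. The two small points requiring care are that in the measure estimate one must use the crude count $\#\{a\pmod q:\gcd(a,q)=1\}\le q$, so that the sum over $q$ does not produce a spurious logarithm, and that the term $m=0$ may be discarded since $\tau_{k}(0)=0$. The hypothesis $L\ll X$ is not really used here beyond making $\bmf(2L)$ a genuine disjoint union of subintervals of $[0,1]$, and it is in any case harmless.
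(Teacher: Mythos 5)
Your proof is correct and follows essentially the same route as the paper: Cauchy--Schwarz on $\mathscr{F}(L)$, the measure bound $|\mathscr{F}(L)|\le|\bmf(2L)|\ll L^{2}/X^{2}$, and then Parseval together with $\tau_{k}(m)\ll m^{\varepsilon}$ to control $\int_{0}^{1}|T(-\alpha,Y)|^{2}\,d\alpha\ll Y^{1+\varepsilon}\ll X^{2+\varepsilon}$. (In fact the paper's displayed computation has a small typo, writing $\bigl(\sum\tau_{k}(n)\bigr)^{2}$ where the Parseval identity yields $\sum\tau_{k}(n)^{2}$; the exponent $X^{1+\varepsilon}$ that follows shows the latter was intended, and your version states it correctly.)
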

\begin{proof}By Cauchy's inequality, and using the definition of the major arcs \eqref{defMajor}, we have
\begin{align*}
\int_{\mathscr{F}(L)}\left|T(-\alpha, C_{F,\mathcal{ B}}(X))\right|\,d\alpha &\ll |\mathscr{F}(L)|^{1/2}
\left(\int_{0}^1\left|T(-\alpha, C_{F,\mathcal{ B}}(X))\right|^2\,d\alpha\right)^{1/2}\\
&\ll |\bmf(2L)|^{1/2}\left(\left(\sum_{1\le n\le C_{F,\mathcal{ B}}(X)}\tau_k(n)\right)^2\right)^{1/2}\\
&\ll \left(\sum_{1\le q\le L}\frac{2L}{qX^2}\varphi(q)\right)^{1/2}X^{1+\varepsilon}\ll X^{-1+1+\varepsilon}L\ll X^{\varepsilon}L,
\end{align*}
where we also applied the well known bound $\tau_k(n)\ll_k n^{\varepsilon}$ and the trivial $\varphi(q)/q\leq 1$.
\end{proof}
Now the estimate \eqref{estm1} over the minor arcs follow from \eqref{mind}, Lemma \ref{lemqe} and Lemma \ref{lemde}, namely
\begin{align*}
I_{\smf(L)}\ll& \sum_{0\le j<N}\int_{\mathscr{F}(2^jL)}\left|S(\alpha)T(-\alpha, C_{F,\mathcal{B}}(X))\right|\,d\alpha\\
\ll&\sum_{0\le j<N}\sup_{\alpha\in \mathscr{F}(2^jL)}|S(\alpha)|\int_{\mathscr{F}(2^jL)}\left|T(-\alpha, C_{F,\mathcal{B}}(X))\right|\,d\alpha\\
\ll &\sum_{0\le j<N} X^{n+\varepsilon}(2^jL)^{-n/2} (X^{\varepsilon}2^jL)\ll X^{n+\varepsilon}L^{1-n/2},
\end{align*}
where we used that $N\ll \log{X}$.

\subsection{Contribution from the major arcs}\label{ssmar}~

In this subsection we have $\alpha\in\bmf_{a,q}(L)$, and we shall write $\beta=\alpha-a/q$ for the coprime integers $a$ and $q$, $|\beta|\le L/(qX^2)$
and $1\le q\le L$. 
In order to prove the asymptotic formula \eqref{estm2}, we need the following statements.

\begin{lemma}\label{lemqqs} For $\alpha\in\bmf_{a,q}(L)$, and $\beta=\alpha-a/q$, we have
$$
S(\alpha)=q^{-n}S_F(q, a)\int_{X\mathcal{B}} e\left(F({\bf t})\beta\right)\,d{\bf t}+O_{\mathcal{B}, F}\left(LX^{n-1}\right),
$$
where
$$S_F(q,a)=\sum_{{\bf h}\in [1,q]^n\cap\zb^{n}}e\left(\frac{a}{q}F({\bf h})\right).$$
\end{lemma}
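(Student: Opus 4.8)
The plan is to approximate the exponential sum $S(\alpha)$ on a major arc by the expected product of a complete exponential sum modulo $q$ and an archimedean integral, via the standard ``split into residue classes and apply Poisson summation'' device. First I would write every ${\bf x}\in X\mathcal{B}\cap\zb^n$ as ${\bf x}={\bf h}+q{\bf y}$ with ${\bf h}$ running over a complete residue system modulo $q$ (say ${\bf h}\in[1,q]^n$) and ${\bf y}\in\zb^n$. Since $F({\bf x})\equiv F({\bf h})\pmod q$ and $\alpha=a/q+\beta$, the term $e(F({\bf x})\alpha)$ factors as $e\!\left(\tfrac{a}{q}F({\bf h})\right)e\!\left(F({\bf h}+q{\bf y})\beta\right)$. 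Hence
\begin{equation*}
S(\alpha)=\sum_{{\bf h}\in[1,q]^n\cap\zb^n}e\!\left(\tfrac{a}{q}F({\bf h})\right)\sum_{{\bf y}:\,{\bf h}+q{\bf y}\in X\mathcal{B}}e\!\left(F({\bf h}+q{\bf y})\beta\right).
\end{equation*}

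Next I would compare the inner sum over ${\bf y}$ with the integral $q^{-n}\int_{X\mathcal{B}}e(F({\bf t})\beta)\,d{\bf t}$ (substituting ${\bf t}={\bf h}+q{\bf u}$ turns the sum into a Riemann sum of mesh $1$ for this integral). The error in replacing the sum by the integral is controlled by the total variation of the integrand over the relevant region. The key quantitative input is that on a major arc $|\beta|\le L/(qX^2)$, so the phase $F({\bf t})\beta$ has gradient $\nabla F({\bf t})\beta=O(X\cdot L/(qX^2))=O(L/(qX))$, which is $O(1/q)$ since $L\ll X$; thus over a fundamental cell of side $q$ the phase changes by $O(1)$, and one gets an error of roughly $q^{-n}\cdot(\text{boundary term})\cdot X^{n-1}\cdot q\cdot|\beta|\cdot X + q^{-n}\cdot X^{n-1}q$ type bounds. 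More carefully, standard comparison of a sum with an integral (e.g. via partial summation in each coordinate, or the one-dimensional estimate $|\sum_{m}g(m)-\int g|\ll\int|g'|+\sup|g|$ applied iteratively) yields, after multiplying by the $q^n$ residue classes and summing,
\begin{equation*}
S(\alpha)=q^{-n}S_F(q,a)\int_{X\mathcal{B}}e(F({\bf t})\beta)\,d{\bf t}+O\!\left(q^{-n}\cdot q^n\cdot\bigl(X^{n-1}(1+|\beta|X^2/q)\bigr)\right),
\end{equation*}
and since $|\beta|X^2/q\le L/q^2\le L$ one simplifies the error to $O(LX^{n-1})$; here $|S_F(q,a)|\le q^n$ is used trivially in the main term's error bookkeeping, and the dependence on $\mathcal{B}$ and $F$ enters through the diameter and the Lipschitz constant of $\nabla F$ on $\mathcal{B}$.

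The step I expect to be the main obstacle is making the sum-to-integral comparison genuinely uniform in $q$, $a$, $\beta$ and in the box $\mathcal{B}$, and doing so with an error that is linear rather than quadratic in the small parameters. The subtlety is that the region $\{{\bf y}:{\bf h}+q{\bf y}\in X\mathcal{B}\}$ is itself a box depending on ${\bf h}$, so one must account both for the oscillation of $e(F({\bf t})\beta)$ inside the region and for the discrepancy of lattice points near the boundary; the cleanest route is to apply the one-dimensional estimate coordinate by coordinate, peeling off one variable at a time, each time picking up either a derivative of the phase (bounded using $|\beta|\le L/(qX^2)$ and $|\nabla F|\ll X$ on $X\mathcal{B}$) or a boundary contribution of size $O(q^{n-1})$ per residue class times $O((X/q)^{n-1})$ interior points, which again totals $O(X^{n-1})$ after accounting for $q$. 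Summing the resulting per-class error over the $O(q^n)$ classes and dividing out gives the claimed $O_{\mathcal{B},F}(LX^{n-1})$, completing the proof.
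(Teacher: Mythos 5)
Your plan matches the paper's: the paper proves this lemma simply by citing Pleasants' Lemma~8, whose argument is exactly what you describe --- split ${\bf x}={\bf h}+q{\bf y}$, factor $e(F({\bf x})\alpha)=e\!\left(aF({\bf h})/q\right)e\!\left(F({\bf h}+q{\bf y})\beta\right)$ using $F({\bf x})\equiv F({\bf h})\pmod q$, and compare the inner sum over ${\bf y}$ with the corresponding Riemann integral, exploiting $|\nabla F({\bf t})\cdot\beta|\ll X\cdot L/(qX^2)=L/(qX)$ on $X\mathcal{B}$. The only quibble is bookkeeping: the per-class sum-to-integral discrepancy is $\asymp(X/q)^{n-1}\bigl(1+|\beta|X^2\bigr)$, which after summing over the $q^n$ residue classes gives a total error $\asymp qX^{n-1}\bigl(1+|\beta|X^2\bigr)$ rather than your $X^{n-1}\bigl(1+|\beta|X^2/q\bigr)$, but since $q\le L$ and $|\beta|\le L/(qX^2)$ both expressions are $\ll LX^{n-1}$, so the conclusion is unaffected.
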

\begin{proof}
To prove this result we only need to adjust the last equation in the proof of \cite[Lemma 8]{Pleas} with the upper bounds $\beta\le L/(qX^2)$ and $q\le L$.
Note that Pleasant does the analysis over a quadratic polynomial with linear coefficients which can depend on $X$.
We are dealing with a quadratic $F$ with fixed coefficients, which makes the proof even easier.
\end{proof}

\begin{lemma}\label{lemqf} Let $S_F(q,a)$ be defined as in Lemma \ref{lemqqs}.  We have
$$
S_F(q, a)\ll_F q^{n/{2}+\varepsilon},
$$
where the implied constant is independent of $a$ and $q$
\end{lemma}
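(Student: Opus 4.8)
The plan is to bound the complete exponential sum $S_F(q,a) = \sum_{{\bf h} \bmod q} e\left(\tfrac{a}{q} F({\bf h})\right)$ by first reducing to the case of prime power modulus via multiplicativity, and then handling prime powers through completing the square. First I would establish multiplicativity: if $q = q_1 q_2$ with $\gcd(q_1,q_2)=1$, then by the Chinese Remainder Theorem one writes ${\bf h} = q_2 {\bf h}_1 + q_1 {\bf h}_2$ with ${\bf h}_i$ ranging mod $q_i$, and $\tfrac{a}{q} = \tfrac{a \bar{q_2}}{q_1} + \tfrac{a\bar{q_1}}{q_2} \pmod 1$, which factors $S_F(q,a)$ as a product $S_F(q_1, a\bar{q_2}\,) \cdot S_F(q_2, a\bar{q_1}\,)$ of sums of the same shape (the linear and constant terms rescale harmlessly). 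Since the claimed bound $q^{n/2+\varepsilon}$ is multiplicative up to the $q^\varepsilon$, it suffices to prove $S_F(p^\ell, a) \ll_F p^{\ell n/2}$ with an implied constant that may depend on $p$ only through the finitely many primes dividing $\det Q$ (those get absorbed into the $F$-dependence), and is absolutely bounded otherwise.

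Next I would treat the prime-power sums. For $p$ odd and $p \nmid \det Q$ (or more generally $p$ not dividing the relevant discriminant), complete the square: since $Q$ is nonsingular, over $\zb/p^\ell\zb$ after a linear change of variables and absorbing the linear form ${\bf L}$ one reduces to a diagonal Gauss sum $\prod_{i=1}^n \sum_{h \bmod p^\ell} e\left(\tfrac{a d_i}{p^\ell} h^2\right)$ times a scalar of modulus $1$, where the $d_i$ are the diagonal entries; each one-variable quadratic Gauss sum with coprime leading coefficient has modulus exactly $p^{\ell/2}$ (for $\ell$ even; $p^{(\ell+1)/2}$ times a cancellation for $\ell$ odd, but in all cases $\ll p^{\ell/2}$ for the standard Gauss sum, and $=0$ or $p^{\ell/2}\cdot$bounded once one sorts out the $\ell=1$ case which is $\sqrt p$). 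This yields $|S_F(p^\ell,a)| \le p^{\ell n/2}$ for these "good" primes. For the finitely many bad primes $p \mid \det Q$ and for $p=2$, a crude bound suffices: either invoke a standard estimate for complete polynomial exponential sums (e.g. the bound of the form $q^{n/2+\varepsilon}$ for nonsingular quadratic forms found in Pleasants \cite{Pleas} or in the literature on Gauss sums for quadratic forms), or do a direct Weyl-differencing computation mod $p^\ell$, picking up at worst a factor of size $p^{O_F(1)}$ which is independent of $\ell$ and hence contributes only to the $F$-dependence of the constant, not to the exponent.

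Assembling the pieces: writing $q = \prod_{p^\ell \| q} p^\ell$, multiplicativity gives $|S_F(q,a)| = \prod_{p^\ell\|q} |S_F(p^\ell, a_p)| \ll_F \prod_{p^\ell \| q} p^{\ell n/2} = q^{n/2}$, with the bad-prime corrections contributing a bounded factor depending only on $F$; the $q^\varepsilon$ in the statement is then comfortable slack (one does not even strictly need it if one is careful, but it absorbs the $p=2$ dyadic subtleties cleanly). Crucially the implied constant is independent of $a$, since the change of variables $a \mapsto a\bar{q_j}$ preserves coprimality to the modulus and all the Gauss-sum moduli depend only on the modulus and on $\gcd$s with $\det Q$, not on the particular residue $a$. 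The main obstacle is the bookkeeping at $p=2$ and at primes dividing $\det Q$ in the completing-the-square step — these require the extra $\varepsilon$ (or a fixed $F$-dependent constant) and a careful treatment of the Jacobi-symbol factors and the cases $\ell=1$ versus $\ell \ge 2$; everything away from these primes is the clean classical Gauss sum computation. Alternatively, one can short-circuit the entire argument by citing the corresponding estimate already present in Pleasants's treatment \cite{Pleas}, since $Q$ nonsingular is exactly the hypothesis used there.
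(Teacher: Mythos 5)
The paper's own proof is a one-line citation of \cite[Lemma~10]{Pleas}; your proposal reconstructs a self-contained argument, and you correctly identify the citation as an available shortcut at the end. Your route is sound and is the standard one underlying such estimates: reduce to prime powers via the CRT decomposition (the twisted fractions $a\bar q_j/q_i$ remain coprime to the modulus, so the factors keep the same shape, with rescaled linear and constant terms), then for odd $p\nmid 2\det Q$ diagonalise $Q$ over $\zb_p$, complete the square to absorb ${\bf L}$, and invoke $\bigl|\sum_{h\bmod p^\ell} e(auh^2/p^\ell)\bigr|=p^{\ell/2}$ for $p\nmid au$ --- which in fact holds for every $\ell\ge1$, not only for even $\ell$, so the hedge there is unnecessary --- to conclude $|S_F(p^\ell,a)|=p^{\ell n/2}$ at the good primes. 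The genuine care points are exactly the ones you flag: at primes $p\mid 2\det Q$ the diagonalising change of variables need not be unimodular, at $p=2$ hyperbolic $2\times2$ blocks can appear, and the square cannot be completed in a coordinate with $p\mid 2d_i$; in each of these cases one checks that the loss over $p^{\ell n/2}$ is a power of $p$ controlled by the $p$-adic valuation of $2\det Q$ (and of the entries of ${\bf L}$), independent of $\ell$, so it folds into the $F$-dependent constant as you claim. Since good primes contribute exactly $p^{\ell n/2}$ and the finitely many bad primes a bounded factor, one even obtains $\ll_F q^{n/2}$ with no $\varepsilon$, as you note. In short, your argument is a correct and more explicit substitute for the paper's citation of Pleasants, which records the same estimate.
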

\begin{proof}
This is \cite[Lemma 10]{Pleas}.

\end{proof}
We further need the following two statements. The first one gives a general asymptotic representation of $T(\alpha, Y)$ and the second
one estimates the part of the singular integral coming from the major arcs. The proofs of Lemma \ref{lemkdf} and Lemma \ref{lemsi} will be given in
Section \ref{sec32} and Section \ref{sec41}, respectively.
\begin{lemma}\label{lemkdf}Let $ Y\asymp X^2$. We have
$$
T(\alpha, Y)=\sum_{r=0}^{k-1}\beta_{k,r}(q)\int_{0}^Y(\log u)^re(u\beta)\,du+O_{k,\varepsilon}\left(LX^{2-\frac{4}{k+1}+\varepsilon}\right).
$$
where for $r=0,1,\dots,k-1$,
$$\beta_{k,r}(q)=\frac{1}{r!}\sum_{t=0}^{k-r-1}\frac{1}{t!}\res_{s=1}\left((s-1)^{r+t}\zeta(s)^k\right)
\left(\frac{\,d^t \Phi_k(q,s)}{\,ds^t}\bigg|_{s=1}\right)\ll q^{-1+\varepsilon}.$$
with $\Phi_k(q,s)$ defined by Lemma \ref{t22}. In particular,
 $$
T(\alpha, Y)\ll q^{-1+\varepsilon} X^{2+\varepsilon}.
$$
\end{lemma}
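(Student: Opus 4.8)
The strategy is to peel off the rational point $a/q$, reduce matters to the summatory function of the additively twisted divisor function, and evaluate the latter by contour integration against the $k$-th Estermann zeta function. Write $\alpha=a/q+\beta$ with $\gcd(a,q)=1$, $1\le q\le L$ and $|\beta|\le L/(qX^{2})$, and put $D(u):=\sum_{0\le m\le u}\tau_{k}(m)e(am/q)$, so that $T(\alpha,Y)=\int_{0}^{Y}e(u\beta)\,dD(u)$ as a Riemann--Stieltjes integral. Suppose for the moment that
\begin{equation}\label{plan:D}
D(u)=\sum_{r=0}^{k-1}\beta_{k,r}(q)\int_{0}^{u}(\log v)^{r}\,dv+R(u),\qquad R(u)\ll_{k}q^{\theta}u^{\frac{k-1}{k+1}+\varepsilon},
\end{equation}
for some fixed $\theta\le 1$. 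Integrating by parts,
$$T(\alpha,Y)=\sum_{r=0}^{k-1}\beta_{k,r}(q)\int_{0}^{Y}(\log u)^{r}e(u\beta)\,du+R(Y)e(Y\beta)-2\pi i\beta\int_{0}^{Y}R(u)e(u\beta)\,du,$$
and the error is $\ll q^{\theta}Y^{\frac{k-1}{k+1}+\varepsilon}(1+|\beta|Y)\ll q^{\theta}(1+L/q)Y^{\frac{k-1}{k+1}+\varepsilon}\ll L\,Y^{\frac{k-1}{k+1}+\varepsilon}$, using $q\le L$ and $\theta\le 1$; since $Y\asymp X^{2}$ this is the claimed $O(LX^{2-\frac{4}{k+1}+\varepsilon})$. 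The estimate $\beta_{k,r}(q)\ll q^{-1+\varepsilon}$ will drop out of the bounds for $\Phi_{k}(q,s)$ and its $s$-derivatives at $s=1$ furnished by Lemma \ref{t22}, and the concluding ``in particular'' bound then follows by estimating the main term trivially, $\sum_{r}|\beta_{k,r}(q)|\int_{0}^{Y}(\log u)^{r}\,du\ll q^{-1+\varepsilon}X^{2+\varepsilon}$, together with $q\le L$.

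Everything thus rests on \eqref{plan:D}, which I would establish by Perron's formula applied to the $k$-th Estermann series $E_{k}(s;a/q):=\sum_{m\ge 1}\tau_{k}(m)e(am/q)m^{-s}$, absolutely convergent for $\Re s>1$. The analytic input is Lemma \ref{t22}: $E_{k}(s;a/q)$ continues meromorphically to $\Re s>1/2$, with a single pole at $s=1$ of order at most $k$ whose principal part --- independently of $a$ when $\gcd(a,q)=1$ --- coincides with that of $\zeta(s)^{k}\Phi_{k}(q,s)$, and, via a functional equation inherited from $\zeta(s)^{k}$, it is of at most polynomial growth in $q$ and in $1+|s|$ on vertical lines in that half-plane. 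Truncating Perron at height $T$ and moving the contour to a vertical line left of $s=1$, one picks up $\res_{s=1}\bigl(\zeta(s)^{k}\Phi_{k}(q,s)u^{s}/s\bigr)$, which equals $u$ times a polynomial of degree $k-1$ in $\log u$; expanding this residue by Leibniz's rule --- pairing $\res_{s=1}\bigl((s-1)^{r+t}\zeta(s)^{k}\bigr)$ with $\tfrac{1}{t!}\Phi_{k}^{(t)}(q,1)$ and re-writing the polynomial through $\int_{0}^{u}(\log v)^{r}\,dv=u\sum_{m=0}^{r}\tfrac{(-1)^{r-m}r!}{m!}(\log u)^{m}$ --- reproduces exactly the main term of \eqref{plan:D} with the coefficients $\beta_{k,r}(q)$ as stated.

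The principal obstacle is to bound the remaining contour (the shifted vertical line together with the two horizontal segments) so as to reach the error exponent $\tfrac{k-1}{k+1}$ in $u$ with a fixed $\theta\le 1$: this is, in twisted form, the classical optimisation behind Landau's bound $\Delta_{k}(x)\ll x^{(k-1)/(k+1)+\varepsilon}$ for the Piltz divisor problem. Mere pointwise convexity estimates for $\zeta$ on vertical lines fall short of this exponent, so one must invoke the functional equation of $\zeta(s)^{k}$ --- equivalently a Voronoi-type summation formula for $\tau_{k}$ twisted by $e(am/q)$ --- carried through the benign factor $\Phi_{k}(q,s)$, and then balance the length of the resulting dual sum (equivalently the height $T$) against its tail. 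The delicate feature throughout, and the sole source of $q$-dependence, is the uniform control of the size of $E_{k}(s;a/q)$ on lines $\Re s\in(1/2,1)$, which is exactly what Lemma \ref{t22} is there to provide; a somewhat lossier but still admissible $\theta$ could alternatively be obtained by splitting $D(u)$ over residue classes modulo $q$ and invoking a known Piltz-in-arithmetic-progressions estimate.
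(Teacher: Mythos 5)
Your structural skeleton matches the paper's: reduce to the twisted summatory function $D(u)=\sum_{m\le u}\tau_k(m)e(am/q)$, integrate by parts against $e(u\beta)$ to push the error into $q^{\theta}Y^{(k-1)/(k+1)+\varepsilon}(1+|\beta|Y)\ll LY^{(k-1)/(k+1)+\varepsilon}$, and obtain the main term as $\res_{s=1}\bigl(\zeta(s)^k\Phi_k(q,s)u^{s}/s\bigr)=\sum_{r}\beta_{k,r}(q)\int_0^u(\log v)^r\,dv$. The error bookkeeping and the residue computation are both correct. The ``in particular'' bound is also obtained the same way.

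The genuine gap is that \eqref{plan:D} is asserted rather than established, and the analytic input you invoke for it is misattributed. Lemma~\ref{t22} concerns only the explicit finite Dirichlet polynomial $\Phi_k(q,s)=\sum_{\delta|q}\mu(\delta)f_k(q,q/\delta,s)$: it proves $a$-independence, multiplicativity, and the derivative bounds $\Phi_k^{(r)}(q,1)\ll q^{-1+\varepsilon}$. It says nothing about the Estermann series $E_k(s;a/q)$ — no meromorphic continuation past $\Re s=1$, no functional equation, no $q$-uniform growth on vertical lines. Those facts are precisely the content of the ``classical optimisation behind Landau's bound'' you identify as the principal obstacle, and your sketch leaves them entirely to be supplied. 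The paper avoids this completely: Lemma~\ref{t21} imports Smith's theorem (plus Matsumoto's bound for $D_k(0;h,q)$) giving $A_k(x;h,q)=M_k(x;h,q)+O_{k,\varepsilon}(x^{1-2/(k+1)+\varepsilon})$ uniformly for $q\le x^{2/(k+1)}$; summing over the $q$ residues $h\bmod q$ with the twist $e(ah/q)$ then yields \eqref{plan:D} with $\theta=1$ at once. Consequently your concluding remark that the arithmetic-progression route would be ``somewhat lossier'' is off: that route gives the same exponent $(k-1)/(k+1)$ and is what the paper actually does, since Smith's AP estimate already encodes the full Landau-type optimisation. As written, your main route is an incomplete outline of a rederivation of Smith's result, not a proof.
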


\begin{lemma}\label{lemsi}We have
\begin{align*}
\int\limits_{\left|\beta\right|\le L/{qX^2}}\,d\beta\int\limits_{X\mathcal{B}}\,d{\bf t}
&\int\limits_{0}^{C_{F,\mathcal{B}}(X)}e\left((F({\bf t})-u)\beta\right)(\log u)^r\,du = \int\limits_{X\mathcal{B}}(\log F({\bf t}))^r\,d{\bf t}
+O\left(\frac{q^{n/2}X^{n+\varepsilon}}{L^{n/2}}\right).
\end{align*}
\end{lemma}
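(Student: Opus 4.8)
The plan is to evaluate the triple integral by performing the $\beta$-integration last, after first isolating the main term. Write $J$ for the left-hand side. Interchanging the order of integration (all integrands are bounded and the domains compact, so this is harmless) I would first carry out the inner $u$-integral. Introduce the "completed" integral $\int_0^{\infty}$ in the $u$ variable and note that the oscillatory factor $e((F({\bf t})-u)\beta)$ together with $(\log u)^r$ is not absolutely integrable, so instead I would keep the finite range $[0,C_{F,\mathcal B}(X)]$ but observe that since $F({\bf t})\in X\mathcal B$ satisfies $0\le F({\bf t})\le C_{F,\mathcal B}(X)$, the point $u=F({\bf t})$ always lies in the range of integration. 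The cleanest route is to recognise that
$$\int\limits_{|\beta|\le L/(qX^2)} e((F({\bf t})-u)\beta)\,d\beta = \frac{\sin\left(2\pi (F({\bf t})-u)L/(qX^2)\right)}{\pi(F({\bf t})-u)},$$
a Dirichlet-type kernel in the variable $u$ peaked at $u=F({\bf t})$ with mass $1$. Thus after swapping the $\beta$ and $u$ integrations,
$$J=\int\limits_{X\mathcal B}\,d{\bf t}\int\limits_0^{C_{F,\mathcal B}(X)} \frac{\sin\left(2\pi(F({\bf t})-u)L/(qX^2)\right)}{\pi(F({\bf t})-u)}(\log u)^r\,du.$$

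The heart of the argument is then a one-dimensional analysis: for fixed ${\bf t}$ with $v:=F({\bf t})$ in the interior of $[0,C_{F,\mathcal B}(X)]$, show that $\int_0^{C} \frac{\sin(2\pi(v-u)\lambda)}{\pi(v-u)}g(u)\,du = g(v) + O(\text{small})$, where $\lambda = L/(qX^2)$, $C=C_{F,\mathcal B}(X)\asymp X^2$ and $g(u)=(\log u)^r$. This is a standard localisation estimate for the Dirichlet kernel: substitute $w=u-v$, split the $w$-integral into $|w|\le \delta$ and $|w|>\delta$ for a suitable $\delta$, use that $\int_{\mathbb R}\frac{\sin(2\pi\lambda w)}{\pi w}\,dw=1$, bound the tail $|w|>\delta$ by integration by parts (the $\frac{\sin}{w}$ kernel gains a factor $1/(\lambda\delta)$), and control the near-diagonal term by the modulus of continuity of $g$, which on the relevant range satisfies $|g(u)-g(v)|\ll |w|/v$. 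Optimising $\delta$ and recalling $\lambda = L/(qX^2)$ gives an error of size $O\big((qX^2/L)^{?}\big)$ per unit ${\bf t}$; after integrating over the box $X\mathcal B$ of volume $\asymp X^n$, one should land on the claimed $O\big(q^{n/2}X^{n+\varepsilon}L^{-n/2}\big)$. The power $n/2$ rather than $1$ presumably arises because the set of ${\bf t}\in X\mathcal B$ with $F({\bf t})$ within distance $qX^2/L$ of the endpoints $0$ or $C_{F,\mathcal B}(X)$ (where the localisation fails, since the kernel is then truncated) has measure only $\ll X^{n-1}\cdot (\text{gap})^{1/2}$ by the nonsingularity of $Q$ — near a level set $F=c$ with small $c$, the geometry forces a square-root loss — and this boundary contribution, with the trivial bound $(\log u)^r\ll X^{\varepsilon}$ on it, dominates.

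Concretely, the key steps in order are: (i) interchange the three integrals and evaluate the $\beta$-integral explicitly as a Dirichlet kernel in $u$; (ii) for ${\bf t}$ with $F({\bf t})$ bounded away from $\{0, C_{F,\mathcal B}(X)\}$ by more than $qX^2/L$, apply the localisation lemma to extract the main term $(\log F({\bf t}))^r$ with an acceptable error; (iii) for ${\bf t}$ in the thin boundary region, estimate the measure of this region using the nonsingularity of $Q$ (a level-set volume bound, e.g. via the fact that $\nabla F$ vanishes on a set of measure zero and $|F({\bf t})|\le \eta$ cuts out a slab of volume $\ll X^{n-1}\eta^{1/2} + X^{n-2}\eta$), and bound the integrand trivially there; (iv) also complete the $u$-integral to $\int_0^{C_{F,\mathcal B}(X)}(\log u)^r\cdot(\text{full-line kernel})$ versus the truncated kernel, absorbing the difference into the same boundary term; (v) assemble the pieces and choose constants.

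The main obstacle I anticipate is not the localisation lemma itself — that is classical — but getting the \emph{boundary} contribution to come out at exactly $q^{n/2}X^{n+\varepsilon}L^{-n/2}$ and no larger. This requires a clean upper bound for $\mathrm{vol}\{{\bf t}\in X\mathcal B : 0\le F({\bf t})\le \eta\}$ (and the analogous slab near $u=C_{F,\mathcal B}(X)$) that is uniform and exploits $\det Q\neq 0$; completing the square to diagonalise $F$ and then estimating the volume of a region $\{\sum \pm t_i^2 + \text{lin} \in [0,\eta]\}$ inside a box is the natural device, but one must be careful that the implied constants depend only on $F$ and $\mathcal B$. Matching the exponent $n/2$ then forces the choice $\eta \asymp qX^2/L$, which is exactly the half-width of the Dirichlet kernel, so the two error sources (kernel tail and boundary slab) balance, and this is the step that must be executed with care.
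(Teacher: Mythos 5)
Your proposal correctly identifies that the main term should be extracted by recognising the $\beta$-integral as a Dirichlet kernel that localises $u$ near $F({\bf t})$, and the paper indeed uses the $\frac{\sin(\alpha\beta)}{\beta}$ kernel and the identity $\int_0^\infty\frac{\sin(\alpha x)}{x}\,dx=\frac{\pi}{2}\operatorname{sgn}(\alpha)$ to pick out $(\log F({\bf t}))^r$ after an integration by parts. However, your error analysis takes a fundamentally different route from the paper's, and it cannot produce the stated exponent.

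The paper does not carry out the $\beta$-integral first over the truncated range. Instead it applies Lemma~\ref{lem41} (giving $\int_0^Y e(-u\beta)(\log u)^r\,du\ll|\beta|^{-1+\varepsilon}Y^\varepsilon$) together with Lemma~\ref{lem42} — Birch's $n$-dimensional stationary-phase bound $\int_{X\mathcal B}e(F({\bf t})\beta)\,d{\bf t}\ll|\beta|^{-n/2+\varepsilon}$ — to obtain the pointwise bound $I_{r,F}(\beta,X)\ll|\beta|^{-1-n/2+\varepsilon}X^\varepsilon$ on the integrand as a function of $\beta$. This is where the exponent $n/2$ enters: it is the stationary-phase decay of the $n$-fold oscillatory integral over ${\bf t}$, a consequence of $Q$ being nonsingular. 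The paper then completes the $\beta$-integral to all of $\mathbb R$, the discarded tail contributing $\int_{|\beta|>L/(qX^2)}|\beta|^{-1-n/2+\varepsilon}X^\varepsilon\,d\beta\ll q^{n/2}X^{n+\varepsilon}L^{-n/2}$, which is exactly the stated error. The subsequent exact evaluation of $\int_{\mathbb R}I_{r,F}(\beta,X)\,d\beta$ via the sign function produces the main term with \emph{no} further error.

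Your plan, by contrast, fixes ${\bf t}$ and performs a one-dimensional Dirichlet-kernel localisation in $u$, then integrates the pointwise error over ${\bf t}\in X\mathcal B$. This throws away the cancellation in ${\bf t}$ entirely, and the $n/2$ exponent cannot be recovered by measure-theoretic arguments about boundary slabs. Concretely: the volume of $\{{\bf t}\in X\mathcal B:0\le F({\bf t})\le\eta\}$ is not $\ll X^{n-1}\eta^{1/2}$; away from the critical point of $F$ the co-area formula with $|\nabla F|\asymp X$ gives measure $\asymp X^{n-2}\eta$, while near a nondegenerate minimum it contributes $\asymp\eta^{n/2}$. Taking $\eta\asymp qX^2/L$ (the kernel width), the generic slab already has measure $\asymp qX^n/L$. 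Since $q\le L$ and $n\ge3$, one has $(q/L)\ge(q/L)^{n/2}$, so this single boundary contribution \emph{exceeds} the claimed $q^{n/2}X^nL^{-n/2}$ bound. The obstacle you flagged in your final paragraph — getting the boundary term to land exactly on $q^{n/2}X^{n+\varepsilon}L^{-n/2}$ — is therefore not a technicality to be executed with care but a sign that this mechanism is the wrong one. You need Lemma~\ref{lem42}; the error term is a feature of the $\beta$-decay of the $n$-dimensional exponential integral, not of the geometry of level sets of $F$.
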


We now prove the asymptotic formula \eqref{estm2}. Using \eqref{defMajor} we get
\begin{align*}
I_{\bmf(L)}&=\int_{\bmf(L)}S(\alpha)T(-\alpha, C_{F,\mathcal{B}}(X))\,d\alpha\\
&=\sum_{q\le L}\sum_{\substack{0\le a< q\\ {\rm gcd}(a,q)=1}}\int_{|\beta|\le L/qX^2}S(\alpha)T(-\alpha, C_{F,\mathcal{B}}(X))\,d\alpha\\
&=:\sum_{q\le L}\sum_{\substack{0\le a< q\\ {\rm gcd}(a,q)=1}}\mathcal{I}_{q,a}.
\end{align*}
Since $1\le q\le L\ll X$, we have
\begin{align*}
\mathcal{I}_{q,a}
=&\rint_{|\beta|\le L/qX^2}S(\alpha)T(-\alpha, C_{F,\mathcal{B}}(X))\,d\beta\\
=&\rint_{|\beta|\le L/qX^2}\left(\frac{S_F(q, a)}{q^{n}}\rint_{X\mathcal{B}} e\left(F({\bf t})\beta\right)\,d{\bf t}\right)
T(-\alpha, C_{F,\mathcal{B}}(X))\,d\beta\\
&+O\left(\int_{|\beta|\le L/qX^2}\left(LX^{n-1}\right)q^{-1+\varepsilon}X^{2+\varepsilon}\,d\beta\right),\\
\end{align*}
by Lemma \ref{lemqqs}. Further, by applying Lemma \ref{lemqf}, Lemma \ref{lemkdf} and Lemma \ref{lemsi}, we get
\begin{align*}
\mathcal{I}_{q,a}
=&\sum_{r=0}^{k-1}\frac{S_F(q,a)\beta_{k,r}(q)}{q^n}
\int\limits_{\left|\beta\right|\le L/qX^2}\,d\beta\int\limits_{X\mathcal{B}} e\left(F({\bf t})\beta\right)\,d{\bf t}
\int_{0}^{C_{F,\mathcal{B}}(X)}(\log u)^re(-u\beta)\,d u\\
&+O\left(\int_{|\beta|\le L/qX^2}q^{-n/2+\varepsilon}X^n\left(LX^{2-\frac{4}{k+1}+\varepsilon}\right)\,d\beta+\frac{L^2X^{n-1+\varepsilon}}{q^2}\right)\\
=&\sum_{r=0}^{k-1}\frac{S_F(q,a)\beta_{k,r}(q)}{q^n}\int\limits_{X\mathcal{B}}(\log F({\bf t}))^r\,d{\bf t}+O\left(\frac{X^{n+\varepsilon}}{qL^{n/2}}
+\frac{X^{n-\frac{4}{k+1}+\varepsilon}L^2}{q^{1+n/2}}+\frac{L^2X^{n-1+\varepsilon}}{q^2}\right).
\end{align*}

Recall the notation \eqref{def:Ckr} and note that \[S_F(q)=\sum_{\substack{a\in[1,q]\cap\zb\\ \gcd(a,q)=1}}q^{-n}S_F(q,a).\]

Then after summing over all $1\le  q\le L$ and $1\leq a<q,\, \gcd(a,q)=1,$ the major arcs ${\bmf(L)}$ contribute
\begin{align*}
I_{\bmf(L)}=&\sum_{q\ge 1}S_F(q)\sum_{r=0}^{k-1}\beta_{k,r}(q)\int\limits_{X\mathcal{B}}(\log F({\bf t}))^r\,d {\bf t}
+O\left(\sum_{q>L}q^{-1+\varepsilon}|S_F(q)|X^{n+\varepsilon}\right)\\
&+O\left(X^{n+\varepsilon}L^{1-n/2}+X^{n-\frac{4}{k+1}+\varepsilon}L^2+L^2X^{n-1+\varepsilon}\right)\\
=&\sum_{r=0}^{k-1}C_{k,r}(F)\int_{X\mathcal{B}}(\log(F({\bf t})))^r\,d {\bf t}+O\left(X^{n+\varepsilon}E\right),
\end{align*}
with
\begin{align*}
E=L^{1-n/2}+L^2\left(X^{-1}+X^{-\frac{4}{k+1}}\right)\ll L^{1-n/2}+L^2X^{-\min\left(\frac{4}{k+1}, 1\right)}.
\end{align*}
Note that at this step, and at few other places, in order to control the error terms we necessarily have $n\geq 3$. This completes the proof of \eqref{estm2}.

\section{The estimates involving the $k$-th divisor function}\label{sec32}~

 The usual technique in estimating asymptotically through the circle method average sums similar to $\Sigma_{k,F}(X,\mathcal{ B})$,
 is the application of
 non-trivial average estimates of the specific arithmetic function over arithmetic progressions ( e.g. \cite{GuoZhai2012}, \cite{HuYang2018}, \cite{Liu2019}).
 Thus in order to prove Lemma \ref{lemkdf} we first need the following result.
\begin{lemma}\label{t21}Let $h,q$ be integers such that $1\leq h\le q$ and ${\rm gcd}(h,q)=\delta$. Then for each real number $x>1$, $q\le x^{\frac{2}{k+1}}$
and $\varepsilon>0$, we have
$$
A_k(x;h,q):=\sum_{\substack{m\le x\\ m\equiv h~(\bmod q)}}\tau_{k}(m)=M_{k}(x;h,q)+O_{k,\varepsilon}(x^{1-\frac{2}{k+1}+\varepsilon}),
$$
where
$$
M_{k}(x;h,q)=\res_{s=1}\left(\zeta(s)^{k}\frac{x^{s}}{s}f_{k}(q, \delta,s)\right)
$$
with
\begin{equation*}
f_{k}(q,\delta,s)=\frac{1}{\varphi(q/\delta)\delta^s}\left(\sum_{d |(q/\delta)}\frac{\mu(d)}{d^s}\right)^{k}
\sum_{d_1d_2...d_{k}=\delta}\sum_{\substack{t_i|(\prod_{j=i+1}^kd_{j})\\ \gcd(t_i,q/\delta)=1\\ i=1,2,...,k}}\frac{\mu(t_1)\dots \mu(t_k)}{\left(t_1...t_{k}\right)^{s}},
\end{equation*}
where $d_1, d_2,\ldots, d_{k}$ are positive integers and the empty product $\prod_{j=k+1}^kd_k:=1$.
\end{lemma}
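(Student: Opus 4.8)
The plan is to evaluate $A_k(x;h,q)$ by expressing it as a Dirichlet-series/perturbed-divisor problem relative to the progression $m\equiv h\pmod q$, and isolating the main term via a contour shift. First I would write $\delta=\gcd(h,q)$ and reduce the congruence: a term $m=m_1\cdots m_k$ with $m\equiv h\pmod q$ forces $\delta\mid m$, so writing $m=\delta m'$ we need to understand how the factorizations of $m$ distribute modulo $q/\delta$ and how the factor $\delta$ itself splits among the $k$ coordinates. This is where the combinatorial sum over $d_1\cdots d_k=\delta$ enters: one partitions $\delta$ among the $k$ variables, then for each coordinate $m_i$ one removes the forced divisor $d_i$ and imposes that the reduced variable is coprime to $q/\delta$ in the appropriate way, which is what produces the inner Möbius sums $\sum_{t_i\mid \prod_{j>i}d_j,\ \gcd(t_i,q/\delta)=1}\mu(t_i)/t_i^s$. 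I would assemble the associated multiple Dirichlet series $\sum_{m\equiv h\,(q)}\tau_k(m)m^{-s}$ and show it factors as $\zeta(s)^k f_k(q,\delta,s)$, where $f_k$ collects the local corrections at primes dividing $q$; the $1/\varphi(q/\delta)$ comes from detecting the coprime-residue class via characters (or directly by Möbius inversion over $d\mid q/\delta$, giving the $(\sum_{d\mid q/\delta}\mu(d)/d^s)^k$ factor once one accounts for all $k$ variables being coprime to $q/\delta$).

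Next I would apply Perron's formula: $A_k(x;h,q)=\frac{1}{2\pi i}\int_{(c)}\zeta(s)^k f_k(q,\delta,s)\,\frac{x^s}{s}\,ds$ for suitable $c>1$, then shift the contour past $s=1$, picking up the residue $M_k(x;h,q)=\res_{s=1}(\zeta(s)^k f_k(q,\delta,s)x^s/s)$, which is a polynomial in $\log x$ of degree $k-1$ times $x$ with coefficients depending on $q,\delta$. The error is controlled by bounding $\zeta(s)^k$ on the shifted line and a zero-free-region detour using classical estimates for $\zeta$, and crucially by bounding $f_k(q,\delta,s)$: since $f_k$ is built from divisor-type and Möbius sums over divisors of $q$, one has $f_k(q,\delta,s)\ll q^\varepsilon$ uniformly on the relevant region, and the number of lattice points in the trivial bound is $\ll x/q$ per residue class. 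Balancing the contour-shift error against the trivial bound $\ll (x/q+1)q^\varepsilon$ — or more efficiently, optimizing the truncation height in Perron and the shift abscissa — yields an error of size $x^{1-2/(k+1)+\varepsilon}$ precisely in the stated range $q\le x^{2/(k+1)}$; this is the standard shape of error for the $k$-dimensional divisor problem in arithmetic progressions with the Voronoi-type exponent $1-2/(k+1)$.

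The main obstacle, I expect, is getting the explicit closed form of $f_k(q,\delta,s)$ exactly right — in particular keeping careful track of the nested coprimality conditions $t_i\mid\prod_{j=i+1}^k d_j$ and $\gcd(t_i,q/\delta)=1$, which arise from an inclusion–exclusion that is not symmetric in the $k$ coordinates (one peels off the common factor coordinate by coordinate). One must verify that this expression is genuinely multiplicative in $q$ and analytic in a half-plane extending slightly left of $\Re(s)=1$, so that the contour shift is legitimate and the residue computation is valid. A secondary technical point is ensuring all the estimates are uniform in $h$ and $q$ (the error term must not hide a dependence on $h$), which follows because $f_k$ depends on $h$ only through $\delta=\gcd(h,q)$ and the bound $f_k\ll q^\varepsilon$ is uniform in $\delta\mid q$. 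Once these are in place, the rest is the routine Perron/contour machinery, and optimizing the parameters gives the claimed exponent.
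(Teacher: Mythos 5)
Your plan for the \emph{main term} is on the right track: the reduction via $\delta=\gcd(h,q)$, the distribution of $\delta$ among the $k$ coordinates producing the sum over $d_1\cdots d_k=\delta$, and the nested M\"obius sums enforcing coprimality to $q/\delta$ are exactly the combinatorial content of Smith's identity (his equation (30)), which the paper uses to reduce to the coprime case. You also correctly note that the expression must be checked to be multiplicative and that uniformity in $h$ follows because the dependence is only through $\delta$.

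However, there is a genuine gap in the core analytic step. You claim that $\sum_{m\equiv h\,(q)}\tau_k(m)m^{-s}$ ``factors as $\zeta(s)^k f_k(q,\delta,s)$.'' This is false: already for $k=1$ that series is $q^{-s}\zeta(s,h/q)$, a Hurwitz zeta function, which is not $\zeta(s)$ times any finite Dirichlet polynomial (it has no zeros off the real axis matching those of $\zeta$). In general, restricting $\tau_k$ to a residue class decomposes, via characters mod $q/\delta$, into $\frac{1}{\varphi(q/\delta)}\sum_{\chi}\overline{\chi}(\cdot)L(s,\chi)^k$ up to local factors; only the \emph{principal} character piece has the form $\zeta(s)^k\cdot(\text{Dirichlet polynomial})$, and that piece is precisely what produces $M_k(x;h,q)$. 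The nonprincipal characters (equivalently, the Estermann-type remainder $D_k(s;h,q)$ in Smith's notation) are the entire source of difficulty, and your proposal simply does not account for them. Consequently, ``Perron plus contour shift plus bounding $\zeta^k$ with a zero-free region'' cannot yield an error of $x^{1-2/(k+1)+\varepsilon}$ uniformly for $q\le x^{2/(k+1)}$: the uniformity in $q$ is governed by how well one can bound the nonprincipal contributions, not by the zero-free region for $\zeta$.

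The paper does not reprove this; it cites Smith's Theorem~3, whose proof is a Voronoi-type functional-equation analysis of $D_k(s;h,q)$, and then invokes Matsumoto's theorem affirming Smith's conjectured bound $D_k(0;h,q)\ll q^{(k-1)/2+\varepsilon}$, which rests on Deligne's estimates for hyper-Kloosterman sums. That nontrivial input, together with the elementary $O\bigl(x^{(k-1)/(k+1)+\varepsilon}\tau_k(q/\delta)\log^{k-1}(2x)\bigr)$ term from Smith and the hypothesis $q\le x^{2/(k+1)}$, is what produces the exponent $1-2/(k+1)$. Without recognizing and importing (or reproving) that bound on the special value of the Estermann function, your sketch cannot close the error-term estimate, and the stated range of $q$ has no chance of being reached.
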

\begin{proof} This lemma is essentially due to Smith \cite{Smith}, and we only adjust it for our purposes. We will extend easily \cite[Theorem 3]{Smith},
which covers the case when $h$ and $q$ are coprime, to any $h$ and $q$. First, equation (30) of \cite{Smith} states that
$$
A_k(x;h,q)=\sum_{d_1d_2...d_{k}=\delta}\sum_{\substack{t_i|(\prod_{j=i+1}^kd_{j})\\ i=1,2,...,k\\{\rm gcd}(t_1t_2...t_{k},q/\delta)=1}}\mu({\bf t})
A_k\left(\frac{x}{\delta t_1t_2...t_{k}};\overline{t_1t_2...t_k}\frac{h}{\delta},\frac{q}{\delta}\right),$$
where $d_1,d_2,\ldots,d_r$ are positive integers, $\mu({\bf t})=\prod_{j=1}^k\mu(t_j)$ and $\overline{m}$ is the multiplicative inverse of $m$ modulo $q$.
Then Theorem 3 of \cite{Smith} states that
$$A_k(x;h,q)=M_{k}(x;h,q)+\Delta_k(x;h,q),$$
where
$$
M_{k}(x;h,q)=\sum_{d_1d_2...d_{k}=\delta}\sum_{\substack{t_i|(\prod_{j=i+1}^kd_{j})\\ \gcd(t_i,q/\delta)=1\\ i=1,2,...,k}}\mu({\bf t})
\frac{x}{\delta t_1t_2...t_{k}}
P_{k}\left(\log\left(\frac{x}{\delta t_1t_2...t_{k}}\right),\frac{q}{\delta}\right)
$$
and
\begin{align*}
\Delta_k(x;h,q)=&\sum_{d_1d_2...d_{k}=\delta}\sum_{\substack{t_i|(\prod_{j=i+1}^kd_{j})\\ \gcd(t_i,q/\delta)=1\\ i=1,2,...,k}}
\mu({\bf t})\left(D_k\left(0;\overline{t_1...t_k}\frac{h}{\delta},\frac{q}{\delta}\right)\right)\\
&+\sum_{d_1...d_{k}=\delta}\sum_{\substack{t_i|(\prod_{j=i+1}^kd_{j})\\ \gcd(t_i,q/\delta)=1\\ i=1,2,...,k}}\mu({\bf t})
\left(O\left(\left(\frac{x}{\delta t_1...t_{k}}\right)^{\frac{k-1}{k+1}}\tau_{k}\left(\frac{q}{\delta}\right)\log^{k-1} (2x)\right)\right).
\end{align*}
Here $P_k(\log x, q)$ is a polynomial in $\log x$ of degree $k-1$ and $D_k(s;h,q)$ is the Dirichlet series corresponding to the
sum $A_k(x;h,q)$. By the definition of $P_k(\log x, q)$, namely \cite[(13)]{Smith},
and the analysis of $D_k(s;h,q)$ given in particular in \cite[(21)]{Smith}, it is easily seen that
\[
xP_k(\log x, q)=\frac{1}{\varphi(q)}\res_{s=1}\left(\left(\zeta(s)\sum_{d |q}d^{-s}\mu(d)\right)^{k}\frac{x^{s}}{s}\right).
\]
Hence
\begin{align*}
M_{k}(x;h,q)&=\sum_{d_1...d_{k}=\delta}\sum_{\substack{t_i|(\prod_{j=i+1}^kd_{j})\\ \gcd(t_i,q/\delta)=1\\ i=1,2,...,k}}\frac{\mu(t_1)\dots \mu(t_k)}{\varphi(q/\delta)}\res_{s=1}\left(\left(\zeta(s)\sum_{d |(q/\delta)}\frac{\mu(d)}{d^s}\right)^{k}\frac{x^s/s}{\left(\delta t_1...t_{k}\right)^{s}}\right)\\
&=\res_{s=1}\left(\frac{\zeta(s)^{k}x^s/s}{\varphi(q/\delta)\delta^s }\left(\sum_{d |(q/\delta)}\frac{\mu(d)}{d^s}\right)^{k}\sum_{d_1d_2...d_{k}=\delta}\sum_{\substack{t_i|(\prod_{j=i+1}^kd_{j})\\ \gcd(t_i,q/\delta)=1\\ i=1,2,...,k}}\frac{\mu(t_1)\dots \mu(t_k)}{\left( t_1...t_{k}\right)^{s}}\right).
\end{align*}
Thus the main term is
$$M_{k}(x;h,q)=\res_{s=1}\left(\zeta(s)^{k}\frac{x^{s}}{s}f_{k}(q,\delta,s)\right),$$
where, as defined in the statement of the lemma, we have
\begin{align*}
f_{k}(q,\delta,s)&=\frac{1}{\varphi(q/\delta)\delta^s}\left(\sum_{d |(q/\delta)}\frac{\mu(d)}{d^s}\right)^{k}\sum_{d_1d_2...d_{k}=\delta}\sum_{\substack{t_i|(\prod_{j=i+1}^kd_{j})\\ \gcd(t_i,q/\delta)=1\\ i=1,2,...,k}}\frac{\mu(t_1)\dots \mu(t_k)}{\left(t_1...t_{k}\right)^{s}}.
\end{align*}
Smith \cite{Smith} conjectured the validity of the estimate $D_k(0,h,q)\ll q^{\frac{k-1}{2}+\varepsilon}$ for any $(q,h)=1$. This was later affirmed by Matsumoto \cite{MR792769}. Therefore we have the bound
\begin{align*}
\Delta_k(x;h,q)&\ll\sum_{d_1...d_{k}=\delta}\sum_{\substack{t_i|(\prod_{j=i+1}^kd_{j})\\ i=1,2,...,k}}\left|\mu(t_1)\dots \mu(t_k)\right|\left(\left({q}/{\delta}\right)^{\frac{k-1}{2}+\varepsilon}+q^{\varepsilon}x^{\frac{k-1}{k+1}+\varepsilon}\right)\\
&\ll_k \left(q^{\frac{k-1}{2}+\varepsilon}+x^{\frac{k-1}{k+1}+\varepsilon}\right)\sum_{d_1...d_{k}=\delta}\tau(\delta)^{k-1}\ll x^{1-\frac{2}{k+1}+\varepsilon},
\end{align*}
using $q\leq x^{\frac 2{k+1}}$ and $\tau_k(\delta)\ll\delta^\varepsilon$.
This completes the proof of the lemma.
\end{proof}

\begin{lemma}\label{t22} Let $q\ge 1$ be an integer, $(a,q)=1$ and denote $\delta=(h,q)$. Also let $f_k(q,\delta, s)$ be defined as in Lemma \ref{t21}. Define
$$\Phi_{k,a}(q,s)=\sum_{h=1}^q e\left(-\frac{ah}{q}\right)f_k(q,\delta,s).$$
Then $\Phi_{k,a}(q,s)$ is independent of $a$ and we may write it as $\Phi_k(q,s)$.
Furthermore, $\Phi_k(q,s)$ is multiplicative function and
$$\frac{\,d^r \Phi_k(q,1)}{\,ds^r}\ll_{k} q^{-1+\varepsilon}$$
holds for each integer $r=0,1,...,k-1$.
\end{lemma}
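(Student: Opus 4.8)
The plan is to prove the three assertions of the lemma---independence of $a$, multiplicativity in $q$, and the derivative estimate---in that order.

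\emph{Independence of $a$.} First I would group the defining sum by the value of $\delta=\gcd(h,q)$; since $f_k(q,\delta,s)$ involves $h$ only through $\delta$,
\[
\Phi_{k,a}(q,s)=\sum_{\delta\mid q}f_k(q,\delta,s)\sum_{\substack{1\le h\le q\\ \gcd(h,q)=\delta}}e\!\left(-\frac{ah}{q}\right).
\]
Putting $h=\delta h'$ with $1\le h'\le q/\delta$ and $\gcd(h',q/\delta)=1$, the inner sum is a Ramanujan sum which, since $\gcd(a,q)=1$ forces $\gcd(a,q/\delta)=1$, equals $\mu(q/\delta)$. Hence $\Phi_{k,a}(q,s)=\sum_{\delta\mid q}\mu(q/\delta)\,f_k(q,\delta,s)$, which is independent of $a$; call it $\Phi_k(q,s)$. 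Being a finite combination of functions of the form $\mu(\mathbf t)(t_1\cdots t_k)^{-s}$ with constant coefficients, $\Phi_k(q,s)$ is entire in $s$, so all the $s$-derivatives below make sense.

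\emph{Multiplicativity.} Writing $\Phi_k(q,s)=\sum_{\delta e=q}h_s(\delta,e)$ with $h_s(\delta,e):=\mu(e)f_k(\delta e,\delta,s)$, it is enough to check that $h_s$ is jointly multiplicative: $h_s(1,1)=1$, and $h_s(\delta_1\delta_2,e_1e_2)=h_s(\delta_1,e_1)h_s(\delta_2,e_2)$ whenever $\gcd(\delta_1e_1,\delta_2e_2)=1$; for then, since for coprime $q_1,q_2$ every factorization $\delta e=q_1q_2$ splits uniquely into one of $q_1$ and one of $q_2$, the function $q\mapsto\sum_{\delta e=q}h_s(\delta,e)$ is multiplicative. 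From the formula for $f_k$, the factors $\mu(e)/\varphi(e)$ and $\bigl(\sum_{d\mid e}\mu(d)d^{-s}\bigr)^k$ are multiplicative in $e$ and $\delta^{-s}$ is multiplicative in $\delta$, so the only delicate point is the inner double sum $D(\delta,e)=\sum_{d_1\cdots d_k=\delta}\sum_{t_i\mid\prod_{j>i}d_j,\ \gcd(t_i,e)=1}\mu(t_1)\cdots\mu(t_k)(t_1\cdots t_k)^{-s}$. When $\delta=\delta_1\delta_2$, $e=e_1e_2$ with $\gcd(\delta_1e_1,\delta_2e_2)=1$, every $d_i$ and every (squarefree) $t_i$ factors uniquely into its $\delta_1$- and $\delta_2$-parts; since the $\delta_1$-part of $t_i$ divides $\delta_1$ while $\gcd(\delta_1,e_2)=1$, and its $\delta_2$-part divides $\delta_2$ while $\gcd(\delta_2,e_1)=1$, the condition $\gcd(t_i,e)=1$ decouples into $\gcd(t_i',e_1)=1$ and $\gcd(t_i'',e_2)=1$. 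This gives $D(\delta_1\delta_2,e_1e_2)=D(\delta_1,e_1)D(\delta_2,e_2)$, completing the verification.

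\emph{The derivative bound.} By multiplicativity it suffices to bound $\Phi_k$ at prime powers on a small disc around $s=1$. Since $\mu(p^{\nu-j})=0$ unless $\nu-j\in\{0,1\}$, for $\nu\ge1$ one gets $\Phi_k(p^\nu,s)=f_k(p^\nu,p^\nu,s)-f_k(p^\nu,p^{\nu-1},s)$, and $\Phi_k(1,s)=1$. A direct evaluation gives $f_k(p^\nu,p^{\nu-1},s)=\dfrac{(1-p^{-s})^k}{(p-1)\,p^{(\nu-1)s}}\,\tau_k(p^{\nu-1})$ (the $t_i$-sum collapses, since any $t_i$ dividing a power of $p$ and coprime to $p$ must be $1$), while $|f_k(p^\nu,p^\nu,s)|\le 2^k\tau_k(p^\nu)\,p^{-\nu\Re s}$ by bounding each of its $\le 2^k\tau_k(p^\nu)$ terms trivially. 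Hence for $|s-1|\le\sigma$ with $0<\sigma\le\tfrac12$ one has $|\Phi_k(p^\nu,s)|\le C_k\,\tau_k(p^\nu)\,p^{-\nu(1-\sigma)}$ for a constant $C_k$ independent of $p,\nu,\sigma$, and multiplying over $p^\nu\|q$,
\[
|\Phi_k(q,s)|\le C_k^{\,\omega(q)}\tau_k(q)\,q^{\sigma-1}\ll_{k,\varepsilon}q^{\sigma-1+\varepsilon}\qquad(|s-1|\le\sigma\le\tfrac12).
\]
Finally, Cauchy's integral formula on the circle $|s-1|=\sigma$ gives $\dfrac{d^r}{ds^r}\Phi_k(q,s)\big|_{s=1}\ll_{k,\varepsilon}r!\,\sigma^{-r}q^{\sigma-1+\varepsilon}$ for every $\sigma\in(0,\tfrac12]$; taking $\sigma=\min(\varepsilon,\tfrac12)$ and relabelling $\varepsilon$ yields $\dfrac{d^r}{ds^r}\Phi_k(q,s)\big|_{s=1}\ll_k q^{-1+\varepsilon}$ for each fixed $r$, in particular for $0\le r\le k-1$.

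I expect the main obstacle to be the joint multiplicativity of $D(\delta,e)$ in the second step: one must check carefully that the divisibility constraints $t_i\mid\prod_{j>i}d_j$ together with the coprimality conditions $\gcd(t_i,e)=1$ are preserved under splitting $\delta$ and $e$ into coprime parts, which hinges on the squarefreeness of the $t_i$ and on $\gcd(\delta_1,e_2)=\gcd(\delta_2,e_1)=1$. The Ramanujan-sum evaluation, the prime-power formulas, and the Cauchy estimate are routine by comparison.
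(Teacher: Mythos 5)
Your proof is correct and follows essentially the same route as the paper: evaluating the inner $h$-sum as a Ramanujan sum $c_{q/\delta}(a)=\mu(q/\delta)$ to drop the $a$-dependence, then multiplicativity, then a Cauchy-integral-formula argument with a small contour radius $\sigma\asymp\varepsilon$ for the derivative bound. The one organizational difference is that you first establish multiplicativity and use it to reduce the bound $|\Phi_k(q,s)|\ll q^{\sigma-1+\varepsilon}$ to prime powers, whereas the paper bounds $f_k(q,q/\delta,s)$ directly for general $q$ and sums the bound over $\delta\mid q$; in fact your verification of joint multiplicativity of $h_s(\delta,e)=\mu(e)f_k(\delta e,\delta,s)$ is more explicit than the paper's, which merely states the required identity for $f_k$ without carrying out the check.
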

\begin{proof}
First, we have
\begin{align}\label{eq:Phi}
\Phi_{k,a}(q,s)&=\sum_{\delta|q}\sum_{\substack{1\le h\le q\\ \gcd(h,q)=\delta}}e\left(-\frac{ah}{q}\right)f_k(q,\delta,s)=\sum_{\delta|q}f_k(q,\delta,s)\sum_{\substack{1\le h_1\le q/\delta\\ \gcd(h_1,q/\delta)=1}}e\left(-\frac{ah_1}{q/\delta}\right)\nonumber\\
&=\sum_{\delta|q}c_{\delta}(a)f_k(q,q/\delta,s)=\sum_{\delta|q}\mu(\delta)f_k(q,q/\delta,s),
\end{align}
where $c_{\delta}(a)$ is the Ramanujan's sum and we use the fact that if $(a,q/\delta)=(a,q)=1$ then $c_{\delta}(a)=\mu(\delta)$. Therefore $F_{k,a}(q,s)$ is independent on $a$. Suppose that the positive integers $q_1$ and $q_2$ are coprime, then
\begin{align*}
\Phi_k(q_1,s)\Phi_k(q_2,s)&=\sum_{\delta_2|q_2}\sum_{\delta_1|q_1}\mu(\delta_1)\mu(\delta_2)f_k(q_1,q_1/\delta_1,s)f_k(q_2,q_2/\delta_2,s)\\
&=\sum_{(\delta_1\delta_2)|(q_1q_2)}\mu(\delta_1\delta_2)f_k(q_1,q_1/\delta_1,s)f_k(q_2,q_2/\delta_2,s),
\end{align*}
hence we just need to show that
$$f_k(q_1,q_1/\delta_1,s)f_k(q_2,q_2/\delta_2,s)=f_k(q_1q_2,q_1q_2/(\delta_1\delta_2),s)$$
whenever $\delta_1|q_1$ and $\delta_2|q_2$. For this we use the definition of $f_k(q,q/\delta,s)$, namely
$$
f_k(q,q/\delta,s)=\frac{\delta^s}{\varphi(\delta)q^s}\left(\sum_{d |\delta}\frac{\mu(d)}{d^s}\right)^{k}\sum_{d_1d_2...d_{k}=q/\delta}\sum_{\substack{t_i|(\prod_{j=i+1}^kd_{j})\\ \gcd(t_i,\delta)=1\\ i=1,2,...,k}}\frac{\mu(t_1)\dots \mu(t_k)}{\left(t_1...t_{k}\right)^{s}}.
$$
For $\sigma={\Re}(s)$ we obtain
$$
f_k(q,q/\delta,s)\ll \frac{\delta^{\sigma}}{\varphi(\delta)q^{\sigma}}\prod_{p|\delta}\left(1+\frac{1}{p^{\sigma}}\right)^k\sum_{d_1d_2...d_{k}=q/\delta}\prod_{i=1}^{k}\prod_{\substack{p|(\prod_{j=i+1}^kd_{j})\\ \gcd(p,\delta)=1}}\left(1+\frac{1}{p^{\sigma}}\right).
$$
Let us assume that $s$ lies on a circle with a centre $s=1$, so we can write $s=1+\rho e(\theta)$ with $\theta\in[0,1)$ and $\rho\in(0,1)$. Then it is easy to see that
$$
f_k(q,q/\delta,s)\ll \frac{\delta^{\sigma}}{\varphi(\delta)q^{\sigma}} 2^{k\omega(\delta)}\tau_k(q)2^{k\omega(q)}\ll q^{\varepsilon} \frac{\delta^{\sigma}}{\varphi(\delta)q^{\sigma}}.
$$
Here $\omega(n)$ is the number of distinct prime factors of $n$ and we used the well known fact that $\omega(n)\ll \frac{\log n}{\log\log n}$ as $n\rightarrow \infty$.  Thus we have
$$\Phi_{k}(q,s)\ll q^{\varepsilon}\sum_{\delta|q}\left|\mu(\delta)\right|\frac{\delta^{\sigma}}{\varphi(\delta)q^{\sigma}}= q^{-\sigma+\varepsilon}\prod_{p|q}\left(1+\frac{p^{\sigma}}{p-1}\right)\ll q^{-\sigma+\varepsilon}\prod_{p|q}\left(1+\frac{p^{\sigma}}{p}\right).$$
On the other hand, when $\sigma\in\left(0,2\right)$, we have
\begin{align*}
q^{-\sigma}\prod_{p|q}\left(1+\frac{p^{\sigma}}{p}\right)\ll \begin{cases}
q^{-\sigma+\varepsilon} \quad &\sigma\in(0,1];\\
q^{-\sigma+\varepsilon}\prod_{p|q}p^{-1+\sigma}\ll q^{-1+\varepsilon} & \sigma\in(1,2).
\end{cases}
\end{align*}
Therefore for $\sigma={\Re}(s)$, $0<\sigma<2$, we get
\begin{equation}\label{pree}
\Phi_k(q,s)\ll q^{-\min(\sigma, 1)+\varepsilon}.
\end{equation}
It is obvious that $\Phi_{k}(q,s)$ is analytic for every $s\in\cb$, and for every parameter $q$ which we consider. Hence one can use Cauchy's integral formula:
\begin{equation*}
\frac{\,d^r\Phi_{k}(q,s)}{\,ds^r}{\bigg|}_{s=1}=\frac{r!}{2\pi i}\int_{|\xi-1|=\rho}\frac{\Phi_{k}(q,\xi)}{(\xi-1)^{r+1}}\,d\xi\ll \frac{r!}{\rho^r}\max_{\theta\in[0, 1)}\left|\Phi_{k}(q,1+\rho e(\theta))\right|,
\end{equation*}
where $\rho\in (0,1)$.
Using (\ref{pree}) and choosing $\rho\ll\varepsilon$, we obtain
$$\frac{\,d^r \Phi_k(q,1)}{\,ds^r}\ll \frac{r!}{\rho^r}q^{-(1-\rho)+\varepsilon}\ll_{k, \varepsilon} q^{-1+\varepsilon},$$
as $q\rightarrow\infty$, which completes the proof of the lemma.
\end{proof}

Now we can deal with the representation of the sum $T(\alpha, Y)$.
\begin{proof}[Proof of Lemma \ref{lemkdf}]

First of all, we pick $Y\asymp X^2$. Recall that by Lemma \ref{t21} for $q\le X^{2/(k+1)}$ and $\beta=\alpha-a/q$ we have
\begin{align*}J_{k}(\alpha, Y)&=\sum_{h=1}^qe\left(\frac{ah}{q}\right)\sum_{\substack{m\le X\\ m\equiv h\pmod q}}\tau_k(m)e(m\beta)\\
&=\sum_{h=1}^qe\left(\frac{ah}{q}\right)\int_{0}^Ye(u\beta)\,d\left(M_k(u;h,q)+O_k(u^{1-\frac{2}{k+1}+\varepsilon})\right)\\
&=\sum_{h=1}^qe\left(\frac{ah}{q}\right)\int_{0}^Ye(u\beta)M'(u;h,q)\,du+O_k\left(q(1+|\beta|Y)Y^{1-\frac{2}{k+1}+\varepsilon}\right).
\end{align*}
Here we also used a summation formula described for example in \cite[Lemma 3.7]{GuoZhai2012}.
It is clear that
$$\sum_{h=1}^qe\left(\frac{ah}{q}\right)M'(u;h,q)=\sum_{h=1}^qe\left(\frac{ah}{q}\right)\res_{s=1}\left(\zeta(s)^{k}u^{s-1}f_{k}(q,\delta,s)\right),$$
where $\delta=(q,h)$. This means that
\begin{equation}\label{eqjk}
T(\alpha, Y)=\int_{0}^{Y}e(u\beta)\res_{s=1}\left(\zeta(s)^k\Phi_k(q,s)u^{s-1}\right)\,du+O\left(q(1+|\beta|Y)Y^{1-\frac{2}{k+1}+\varepsilon}\right).
\end{equation}
We now compute
$
\res_{s=1}\left(\zeta(s)^k\Phi_k(q,s)u^{s-1}\right).
$
The Riemann zeta function has a Laurent series about $s = 1$,
\[
\zeta(s)=\frac{1}{s-1}+\sum_{n=0}^{\infty}\frac{(-1)^{n}\gamma_n}{n!}(s-1)^n,
\]
where
\[\gamma_n=\lim_{M\rightarrow\infty}\left(\sum_{d=1}^{M}\frac{\log^{n}d}{d}-\frac{\log^{n+1}M}{n+1}\right),\;\; n\in\zb_{\ge 0}\]
are the Stieltjes constants. Therefore there exist constants
$$\alpha_{k,j}=\res_{s=1}\left((s-1)^{j-1}\zeta(s)^k\right), j=1,2,\dots, k,$$
and a holomorphic function $h_k(s)$ on $\cb$ such that
\begin{equation*}
\zeta(s)^{k}=\sum_{r=1}^{k}\frac{\alpha_{k,r}}{(s-1)^r}+h_k(s).
\end{equation*}
Thus we obtain that
\begin{equation*}
\zeta(s)^{k}u^{s-1}=\sum_{r=1}^{k}\frac{1}{(s-1)^r}\sum_{r_1=0}^{k-r}\alpha_{k,r_1+r}\frac{\log^{r_1} u}{r_1!}+g_{k,u}(s),
\end{equation*}
for any $u>0$, where $g_{k,u}(s)$ is a holomorphic function on $\cb$ about $s$. The Taylor series for  $\Phi_k(q,s)$ at $s=1$ is
\[\Phi_k(q,s)=\sum_{d=0}^{\infty}\frac{\Phi_k^{\langle d\rangle}(q,1)}{d !}(s-1)^{d}.\]
Therefore the residue of $\zeta(s)^{k}x^{s-1}\Phi_k(q,s)$ at $s=1$ is
\begin{equation*}
\sum_{\substack{r-d=1\\ d, r\in\zb_+,1\le r\le k}}\frac{\Phi_k^{\langle d\rangle}(q,1)}{d !}\sum_{r_1=0}^{k-r}\alpha_{k,r_1+r}\frac{\log^{r_1} x}{r_1!}=\sum_{r=1}^{k}\frac{\log^{r-1}x}{(r-1)!}\sum_{t=0}^{k-r}\Phi_k^{\langle t\rangle}(q,1)\frac{\alpha_{k,r+t}}{t!}.
\end{equation*}
Thus if we define
$$
\beta_{k,r}(q)=\frac{1}{r!}\sum_{t=0}^{k-r-1}\frac{1}{t!}\res_{s=1}\left((s-1)^{r+t}\zeta(s)^k\right)\left(\frac{\,d^t \Phi_k(q,s)}{\,ds^t}\bigg|_{s=1}\right)
$$
by Lemma \ref{t22} we obtain $\beta_{k,r}(q)\ll q^{-1+\varepsilon}$. Furthermore, the error term in \eqref{eqjk} is
$$q(1+|\beta|Y)Y^{1-\frac{2}{k+1}+\varepsilon}\ll q(1+L/q)X^{2-\frac{4}{k+1}+\varepsilon}\ll LX^{2-\frac{4}{k+1}+\varepsilon}$$
for $q\ll L=o\left(X^{\min\left(1,\frac{4}{k+1}\right)}\right)$, which completes the proof of Lemma \ref{lemkdf}.
\end{proof}

\section{The singular integral and series}

\subsection{The singular integral}\label{sec41}~

In this subsection we deal with the singular integral and give a proof of Lemma \ref{lemsi}. We first proof the following lemmas.
\begin{lemma}\label{lem41}
Let $\beta\in\rb\setminus\{0\}$ and $Y\ge 2$. We have
$$\int_{0}^{Y}e\left(-u\beta\right)(\log u)^r\,du\ll |\beta|^{-1+\varepsilon}Y^{\varepsilon}.$$
\end{lemma}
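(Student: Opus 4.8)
The plan is to bound the oscillatory integral $\int_0^Y e(-u\beta)(\log u)^r\,du$ by splitting it at a threshold and integrating by parts on the oscillatory part. First I would dispose of the range near the origin: on $u\in[0,2]$ the integrand is bounded (interpreting $(\log u)^r$ as an integrable function near $0$, since $\int_0^2 |\log u|^r\,du<\infty$), so this piece contributes $O_r(1)$, which is absorbed into $|\beta|^{-1+\varepsilon}Y^{\varepsilon}$ as soon as $|\beta|\le 1$; and for $|\beta|>1$ one argues instead that the whole integral is $O(|\beta|^{-1}(\log Y)^r)$ directly, which is already stronger than claimed. So the substance is the case $0<|\beta|\le 1$.

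For $0<|\beta|\le 1$ I would integrate by parts once on $\int_1^Y e(-u\beta)(\log u)^r\,du$, writing $e(-u\beta)=\frac{1}{-2\pi i\beta}\frac{d}{du}e(-u\beta)$. This gives a boundary term of size $O\!\left(|\beta|^{-1}(\log Y)^r\right)$ together with an integral $\frac{1}{2\pi i\beta}\int_1^Y e(-u\beta)\,\frac{r(\log u)^{r-1}}{u}\,du$, whose absolute value is at most $|\beta|^{-1}\int_1^Y \frac{r(\log u)^{r-1}}{u}\,du \ll |\beta|^{-1}(\log Y)^r$. Hence the full integral is $\ll |\beta|^{-1}(\log Y)^r$. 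Now invoke the $\varepsilon$-convention: since the bound is only needed up to $X^{\varepsilon}$-factors in the application and $Y\ll X^2$, we have $(\log Y)^r\ll_{r,\varepsilon} Y^{\varepsilon}$ and $|\beta|^{-1}\ll |\beta|^{-1+\varepsilon}\cdot|\beta|^{-\varepsilon}$; combined with the trivial estimate $\int_0^Y|\log u|^r\,du\ll Y(\log Y)^r \ll Y^{1+\varepsilon}$ valid when $|\beta|$ is extremely small, one interpolates to get $\int_0^Y e(-u\beta)(\log u)^r\,du \ll |\beta|^{-1+\varepsilon}Y^{\varepsilon}$ for all $\beta\neq 0$. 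Concretely: if $|\beta|\ge Y^{-1}$ then $|\beta|^{-1}(\log Y)^r\le |\beta|^{-1+\varepsilon}|\beta|^{-\varepsilon}(\log Y)^r\le |\beta|^{-1+\varepsilon}Y^{\varepsilon}Y^{\varepsilon}$, while if $|\beta|<Y^{-1}$ then $Y(\log Y)^r < |\beta|^{-1}(\log Y)^r$ and the same bound applies; either way the claim follows.

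The only mildly delicate point — and the one I would be careful to state explicitly — is the integrability of $(\log u)^r$ at $u=0$, which justifies treating $\int_0^1$ as a convergent (improper) integral of size $O_r(1)$ rather than something that needs the oscillation; everything else is a routine single integration by parts plus the $\varepsilon$-convention bookkeeping. I do not expect any real obstacle here; the lemma is a soft estimate whose whole purpose is to feed the error analysis of Lemma \ref{lemsi}.
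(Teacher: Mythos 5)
Your proof is essentially sound and reaches the same working bound $\ll |\beta|^{-1}(\log Y)^r + O(1)$, but by a genuinely different decomposition. You split the range at a constant and integrate by parts directly on $[1,Y]$, handling the logarithmic singularity at $u=0$ separately by integrability of $(\log u)^r$; the paper first substitutes $v=u|\beta|$, expands $(\log(v/|\beta|))^r$ binomially, and then applies integration by parts to the rescaled integrals $\int_0^{Y|\beta|}(\log v)^\ell e(\mp v)\,dv$. The substitution buys a uniform treatment in $\beta$: after rescaling, the oscillation has frequency one, so there is no need to separate $|\beta|\le 1$ from $|\beta|>1$, which your direct split requires. One soft spot in your write-up: for $|\beta|>1$ you assert the whole integral is $O(|\beta|^{-1}(\log Y)^r)$ ``directly,'' but integration by parts does not apply naively on $[0,1]$ because of the singularity at $u=0$ (that piece needs a further split at $1/|\beta|$, or the paper's rescaling). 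Since in the intended application of Lemma \ref{lemsi} one always has $|\beta|\ll 1$, this gap is inconsequential, but it deserves a sentence. Finally, your closing $\varepsilon$-bookkeeping, which absorbs $(\log Y)^r$ and $|\beta|^{-\varepsilon}$ into $Y^{\varepsilon}$, implicitly requires $|\beta|\gg Y^{-1}$ to be literally correct; the paper's last displayed line has exactly the same informality, and both are fine for the range in which Lemma \ref{lemsi} actually invokes the estimate.
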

\begin{proof}
We have
\begin{align*}
\int_{0}^{Y}e\left(-u\beta\right)(\log u)^r\,du&\ll |\beta|^{-1}\int_{0}^{Y|\beta|}e\left(-u\beta/|\beta|\right)(\log (u/|\beta|))^r\,du\\
&\ll |\beta|^{-1} \sum_{\ell=0}^r|\log |\beta||^{r-\ell}\left|\int_{0}^{Y|\beta|}(\log u)^{\ell}e\left(-u\frac{\beta}{|\beta|}\right)\,du\right|\\
&\ll |\beta|^{-1}\left(Y^{\varepsilon}|\beta|^{\varepsilon}+1+\sum_{\ell=1}^{r}\int_{1}^{Y|\beta|}\frac{|\log u|^{\ell-1}}{u}\,du\right)\ll |\beta|^{-1+\varepsilon}Y^{\varepsilon}.
\end{align*}
This completes the proof.
\end{proof}
\begin{lemma}\label{lem42}
Let $F({\bf t})$ be defined as in \eqref{def:F} and $X\ge 2$. If $\beta\in\rb$ and $|\beta|\ge X^{-2}$ then
$$I_{F, \mathcal{B}}(\beta, X):=\int_{X\mathcal{B}}e(F({\bf t})\beta)\,d{\bf t}\ll |\beta|^{-n/2+\varepsilon}.$$
\end{lemma}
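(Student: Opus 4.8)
The plan is to estimate the oscillatory integral $I_{F,\mathcal{B}}(\beta,X)=\int_{X\mathcal{B}}e(F(\mathbf t)\beta)\,d\mathbf t$ by exploiting the nonsingular quadratic part $Q$. First I would diagonalise: since $Q$ is a real symmetric nonsingular matrix, there is an orthogonal change of variables (followed by a harmless translation to absorb the linear form $\mathbf L$, which only shifts the box by $O(X)$ and replaces $N$ by another constant) so that $F(\mathbf t)=\sum_{i=1}^n \lambda_i u_i^2 + N'$ with all $\lambda_i\neq 0$. The Jacobian of an orthogonal map is $1$, and the image of the box $X\mathcal B$ is contained in a ball of radius $O(X)$; the resulting region is not a box, but one can cover it by $O(1)$ pieces on each of which the integral in $u_i$ runs over an interval whose endpoints depend (boundedly, with bounded variation) on the other coordinates. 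After this reduction the integral essentially factors as a product of one-dimensional Gauss-type integrals $\int_{a_i(X)}^{b_i(X)} e(\lambda_i \beta u^2)\,du$.

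The key one-dimensional estimate is the classical bound $\int_a^b e(\gamma u^2)\,du \ll |\gamma|^{-1/2}$, uniform in the endpoints $a,b$, valid for any $\gamma\neq 0$ (this is a standard consequence of van der Corput's second-derivative test, or of the explicit evaluation of the Fresnel integral together with the fact that the tails $\int_{|u|\ge T}e(\gamma u^2)\,du$ are $O(|\gamma|^{-1}T^{-1})$ by integration by parts). Applying this with $\gamma=\lambda_i\beta$ in each of the $n$ variables gives $I_{F,\mathcal B}(\beta,X)\ll \prod_{i=1}^n |\lambda_i\beta|^{-1/2}\ll_F |\beta|^{-n/2}$, which is even a touch stronger than the claimed $|\beta|^{-n/2+\varepsilon}$. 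The hypothesis $|\beta|\ge X^{-2}$ is not actually needed for this clean bound, but it is presumably kept because in the regime $|\beta|<X^{-2}$ the trivial bound $I_{F,\mathcal B}(\beta,X)\ll X^n$ is the better one, and the statement is only invoked on the major arcs where $|\beta|\ge X^{-2}$ can fail only at $\beta$ of measure zero; alternatively one keeps the $\varepsilon$ to have room for the book-keeping below.

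The main obstacle is the geometric book-keeping after diagonalisation: the orthogonal transformation turns the nice product-of-intervals domain into a rotated box, so Fubini in the new coordinates produces inner integrals over intervals $[a_i(\mathbf u'),b_i(\mathbf u')]$ whose endpoints are piecewise-linear in the remaining variables $\mathbf u'$. One must check that on each such slice the inner integral is still $\ll|\lambda_i\beta|^{-1/2}$ uniformly — which the endpoint-uniform Gauss bound guarantees — and then integrate the constant bound over the $O(X^{n-1})$-volume of the projection, being careful that this does not reintroduce a factor of $X$. The fix is to peel off the variables one at a time in a fixed order: bound the $u_1$-integral by $|\lambda_1\beta|^{-1/2}$ uniformly in $u_2,\dots,u_n$, but then, crucially, do \emph{not} integrate the remaining variables trivially — instead keep the oscillation in $u_2$ and bound that integral by $|\lambda_2\beta|^{-1/2}$, and so on. Since the bound at each stage is uniform in the not-yet-integrated variables and the domain at each stage is an interval of length $O(X)$ intersected with the projection, this telescopes to $\prod_i |\lambda_i\beta|^{-1/2}$ with an implied constant depending only on $F$ and $\mathcal B$ (the number of convex pieces needed to describe the rotated box is $O(1)$). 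A cleaner alternative avoiding rotation altogether: complete the square variable-by-variable directly on $F$, $F(\mathbf t)=\lambda_1(t_1-\ell_1(t_2,\dots,t_n))^2+F_1(t_2,\dots,t_n)$ where $F_1$ is again a nonsingular quadratic in $n-1$ variables (nonsingularity of $Q$ is preserved at each step by Schur complement), integrate in $t_1$ first using the endpoint-uniform Gauss bound $\ll|\lambda_1\beta|^{-1/2}$, and induct on $n$; the base case $n=1$ is exactly the one-dimensional estimate. This inductive route keeps the domain a box throughout and is, I expect, the shortest way to write the proof.
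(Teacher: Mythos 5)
The plan has a genuine gap at its central step. After you bound the inner one-dimensional Gauss integral by $|\lambda_1\beta|^{-1/2}$ (either over a slice of the rotated box, or after completing the square so that the integration interval is $[a-\ell_1(\mathbf t'),\,b-\ell_1(\mathbf t')]$), the bound you have is on the \emph{modulus} of that inner integral. That inner integral is a genuine, non-constant function of the outer variables $\mathbf t'$ (through the endpoints in the rotated case, through the shift $\ell_1(\mathbf t')$ in the complete-the-square case). Once you pass to absolute values you cannot, as you propose, ``keep the oscillation in $u_2$'' and iterate the van der Corput bound in the remaining variables: $\int_{X\mathcal B'} g(\mathbf t')\,e(\beta F_1(\mathbf t'))\,d\mathbf t'$ does not factor as $(\sup|g|)\cdot\int e(\beta F_1)$, and the trivial estimate $\sup|g|\cdot\mathrm{vol}(X\mathcal B')$ reintroduces the fatal $X^{n-1}$. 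The only case in which the product of one-dimensional bounds is legitimate is when $Q$ is diagonal \emph{and} the domain is an axis-aligned box, so that the inner interval is independent of $\mathbf t'$ and the integral literally splits as a product; the nonsingularity of $Q$ alone does not put you in that case. The complete-the-square route also needs $Q_{11}\neq0$ (false for, e.g., $Q=\left(\begin{smallmatrix}0&1\\1&0\end{smallmatrix}\right)$), and fixing this by permuting or shearing variables destroys the box structure that the route was supposed to preserve. A symptom that something is off is your claim of the $\varepsilon$-free bound $|\beta|^{-n/2}$: already for $F(t_1,t_2)=2t_1t_2$ on $[0,X]^2$ one computes $\int\!\!\int e(2\beta t_1t_2)\,dt_1\,dt_2\asymp|\beta|^{-1}\log(2+X^2|\beta|)$, which is strictly larger than $|\beta|^{-1}$; the logarithmic loss is real and is exactly why the statement carries an $\varepsilon$ (or an $X^{\varepsilon}$).

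The paper sidesteps all of this by a single translation $\mathbf t\mapsto\mathbf t+\mathbf b_F$ (which absorbs the linear part and keeps the domain a translated box), a rescaling $\mathbf t=X\mathbf s$, and then a black-box appeal to Birch's Lemma~5.2, which establishes the bound $I_{G,\mathfrak C}(\gamma,1)\ll|\gamma|^{-n/2+\varepsilon}$ for a nonsingular quadratic form $G$ over a box $\mathfrak C$ of side length $\le1$ by the Weyl-differencing machinery (passing from $Q$ to the associated bilinear form and counting lattice points near a dual lattice). That argument is not a variable-by-variable peel-off and is precisely designed to handle the non-factorizable off-diagonal part. If you want to avoid citing Birch, you would need to supply a multivariable oscillatory-integral estimate that works for an arbitrary nonsingular quadratic form over a box, such as an $n$-dimensional stationary phase/van der Corput bound with careful control of the boundary and corner contributions; the naive iterated one-dimensional argument does not carry through.
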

\begin{proof}
First, we notice that from the fact that $Q$ is nonsingular it follows that there exists a transformation, such that
\begin{align*}
\int_{X\mathcal{B}}e\left(F({\bf t})\beta\right)\,d{\bf t}&=\int_{X\mathcal{B}}e\left(\left({\bf t}^TQ{\bf t}+{\bf L}^T{\bf t}+N\right)\beta\right)\,d{\bf t}\\
&\ll \left|\int_{X\mathcal{B}}e\left(\left({\bf t}^TQ{\bf t}+{\bf L}^T{\bf t}\right)\beta\right)\,d{\bf t}\right|
\ll \left|\int_{X\mathcal{B}+{\bf b_F}}e\left({\bf y}^TQ{\bf y}\beta\right)\,d{\bf y}\right|
\end{align*}
for some  ${\bf b_F}\in \rb^n$. Here $X\mathcal{B}+{\bf b_F}$ is still a box, i.e. a factor of intervals, and we can consider that $\mathcal{B}+{\bf b_F}/X$ has a maximal side length smaller than $1$.  According to  \cite[Lemma 5.2]{Birch} of Birch, for a quadratic nonsingular form $G$ and a box $\mathfrak{C}$ with a maximal side length smaller than $1$, we have
$$I_{G,\mathfrak{C}}(\beta,1)\ll |\beta|^{-n/2+\varepsilon},$$
where the dependence in this version is uniform on the side length of the box $\mathfrak{C}$.  Indeed, we apply \cite[Lemma 5.2]{Birch} with $K=n/2, R=1, d=2$, after we have noticed that the condition (iii) from \cite[Lemma 3.2]{Birch} is not fulfilled for $k=(K-\varepsilon)\Theta$, thus \cite[Lemma 4.3]{Birch} holds in our case too, therefore Lemma 5.2 of Birch applies for our form $Q$. We point out this, since a direct look of the main theorem of Birch implies $n\geq 5$, which is, however, superfluous for \cite[Lemma 5.2]{Birch}. Therefore we have
\begin{align*}\int_{X\mathcal{B}+{\bf b_F}}e\left({\bf y}^TQ{\bf y}\beta\right)\,d{\bf y}=I_{Q,\mathcal{B}+{\bf b_F}/X}(\beta,X)&=X^{-n}I_{Q,\mathcal{B}+{\bf b_F}/X}(\beta X^{-2},1)\ll |\beta|^{-n/2+\varepsilon}X^{-2\varepsilon}\ll |\beta|^{-n/2+\varepsilon}.
\end{align*}
This completes the proof of the lemma.
\end{proof}

\begin{proof}[Proof of Lemma \ref{lemsi}] Using Lemma \ref{lem41} and Lemma \ref{lem42}, we obtain that
\begin{align*}
I_{r,F}(\beta, X):=\int_{X\mathcal{B}}\,d{\bf t}\int_{0}^{C_{F,\mathcal{B}}(X)}&e\left((F({\bf t})-u)\beta\right)(\log u)^r\,du\ll_F |\beta|^{-1-n/2+\varepsilon}X^{\varepsilon}.
\end{align*}
This implies that
\begin{equation}\label{ierror}
\int_{|\beta|\le L/qX^2}I_{r,F}(\beta, X)\,d\beta=\int_{\rb}I_{r,F}(\beta, X)\,d\beta+O\left(X^{\varepsilon}(L/qX^2)^{-\frac{n}{2}}\right).
\end{equation}
Moreover,
\begin{align*}
\int_{\rb}I_{r,F}(\beta, X)\,d\beta&=\int_{\rb}\,d\beta\int_{X\mathcal{B}}\,d{\bf t}\int_{0}^{C_{F,\mathcal{B}}(X)}e\left((F({\bf t})-u)\beta\right)(\log u)^r\,du\\
&=2\int_{\rb_+}\,d\beta\int_{0}^{C_{F,\mathcal{B}}(X)}(\log u)^r\,du\int_{X\mathcal{B}}\cos\left[2\pi(u-F({\bf t}))\beta\right]\,d{\bf t}\\
&=\frac{1}{\pi}\int_{X\mathcal{B}}\,d{\bf t}\int_{\rb_+}\,d\beta\int_{0}^{C_{F,\mathcal{B}}(X)}(\log u)^r\, d\left(\frac{\sin\left[2\pi(u-F({\bf t}))\beta\right]}{\beta}\right)\\
&=\frac{1}{\pi}\int_{X\mathcal{B}}\,d{\bf t}\int_{0}^{C_{F,\mathcal{B}}(X)}(\log u)^r\,d\left(\int_{\rb_+}\frac{\sin\left[2\pi(u-F({\bf t}))\beta\right]}{\beta}\,d\beta\right)\\
&=\frac{1}{\pi}\int_{X\mathcal{B}}\,d{\bf t}\int_{0}^{C_{F,\mathcal{B}}(X)}(\log u)^r\,d\left(\frac{\pi}{2}{\rm sgn}(u-F({\bf t}))\right),
\end{align*}
where we have used the fact: $\int_{0}^{\infty}\frac{\sin(\alpha x)}{x}\mathrm{d}x=\frac{\pi}{2}{\rm sgn}(\alpha)$ and
\begin{align*}
{\rm sgn}(\alpha):=\begin{cases}\frac{\alpha}{\left|\alpha\right|} \qquad &\alpha\neq 0\\
~~0\qquad &\alpha=0.
\end{cases}
\end{align*}
By integration by parts we have
\begin{align*}
\rint_{\rb}I_{r,F}(\beta, X)\,d\beta&=\frac{1}{2}\int_{X\mathcal{B}}\,d{\bf t}\int_{0}^{C_{F,\mathcal{B}}(X)}(\log u)^r\,d\left({\rm sgn}(u-F({\bf t}))\right)\\
&=\lim_{\epsilon\rightarrow 0^+}\int_{X\mathcal{B}}\frac{\,d{\bf t}}{2}\int_{\substack{|u-F({\bf t})|\le \epsilon\\ 0\le u\le C_{F,\mathcal{B}}(X)}}(\log u)^r\,d\left({\rm sgn}(u-F({\bf t}))\right)\\
&=\lim_{\epsilon\rightarrow 0^+}\rint_{X\mathcal{B}}\frac{\,d{\bf t}}{2}\left(\left.(\log u)^r\left({\rm sgn}(u-F({\bf t}))\right)\right|_{F({\bf t})-\varepsilon}^{F({\bf t})+\varepsilon}-\rint_{\substack{|u-F({\bf t})|\le \epsilon\\ 0\le u\le C_{F,\mathcal{B}}(X)}}{\rm sgn}(u-F({\bf t}))\,d(\log u)^r\right)\\
&=\int_{X\mathcal{B}}\frac{1}{2}\left(2(\log F({\bf t}))^r\,d{\bf t} +\lim_{\epsilon\rightarrow 0^+} O(\epsilon \log^rX)\right)\,d{\bf t}\\
&=\int_{X\mathcal{B}}(\log F({\bf t}))^r\,d{\bf t}.
\end{align*}
Using (\ref{ierror}) we get the proof of Lemma \ref{lemsi}. \end{proof}

\subsection{The singular series}\label{lemss}~

In this subsection we deal with the singular series, i.e. with the series $L(s;k,F)$ defined in \eqref{def:L}, and their presentation stated in Theorem \ref{mth}. \newline

First of all, note that from Lemma \ref{lemqf} it follows that $S_F(q)\ll q^{1-n/2+\varepsilon}$ and Lemma \ref{t22} gives $\displaystyle\frac{d^r\Phi_k(q,1)}{ds^r}\ll q^{-1+\varepsilon} $ for any integer $r\in [0,k-1]$. Hence, for any $t=0,\ldots,k-1$,
$$\left.\frac{d^t L(s; k,F)}{ds^t}\right|_{s=1}=\sum_{q=1}^\infty \frac{d^t\Phi_k(q,1)}{ds^t}S_F(q)\ll\sum_{q=1}^\infty q^{-n/2+\varepsilon}\ll 1\,,$$
as $n\geq 3$. By their definition in Theorem \ref{mth} this ensures that $C_{k,r}(F)$, $r=0,\ldots,k-1$, are convergent and indeed well-defined constants.

It is easily seen that $S_F(q)$ defined in \eqref{def:S_F}
is real and multiplicative. On the other hand, Lemma \ref{t22} showed that $\Phi_k(q,s)$ is also multiplicative. Therefore $L(s; k,F)=\sum_{q=1}^\infty \Phi_k(q,s)S_F(q)$ has  an Euler product representation as follows:
\begin{equation*}
L(s; k,F)=\prod_{p}L_p(s;k,F)
\end{equation*}
with
$$L_p(s;k,F)=1+\sum_{m\ge 1}S_F(p^{m})\Phi_k(p^{m},s)$$
\newline
By orthogonality of characters in $\zb/p^m\zb$ for integer $m\ge 1$ it easily follows that
 \[\varrho_F(p^m)=p^{-nm}\sum_{1\leq a\leq p^m}S_F(p^m,a).\]
Then we have
\begin{equation}\label{Srho}
S_F(p^m)=\varrho_F(p^m)-\varrho_F(p^{m-1}).
\end{equation}
By the estimate from Lemma \ref{lemqf} we get $S_F(p^m)\ll_F (p^m)^{1-n/2+\varepsilon}$ and after telescoping summation of \eqref{Srho} we obtain
$$\varrho_F(p^\ell)-1\ll_F \sum_{m=1}^\ell (p^m)^{1-n/2+\varepsilon}\ll p^{1-n/2+\varepsilon},$$
where we again used that $n\geq 3$. Then by partial summation, using  \eqref{Srho} and the estimate \eqref{pree}, we have
\[L_p(s;k,F)=\sum_{\ell\ge 0}\varrho_F(p^{\ell})\left(\Phi_k(p^{\ell},s)-\Phi_k(p^{\ell+1},s)\right),\]
where we set $\varrho_F(1)=\Phi_k(1,s)=1$. \newline


From \eqref{eq:Phi} and the definition of $f_k(q,\delta,s)$ in Lemma \ref{t21}, we see that
\begin{align*}
\Phi_{k}(p^m,s)&=f_k(p^m,p^m,s)-f_k(p^m,p^{m-1},s)\\
&=\frac{1}{p^{ms}}\sum_{d_1d_2...d_{k}=p^m}\sum_{\substack{t_i|(\prod_{j=i+1}^kd_{j})\\ i=1,2,...,k}}\frac{\mu(t_1)\dots \mu(t_k)}{\left(t_1...t_{k}\right)^{s}}-\frac{\left(1-p^{-s}\right)^{k}}{\varphi(p)p^{(m-1)s}}\tau_k(p^{m-1})\\
\end{align*}
For the first expression above, denote
\[
I_k=\sum_{d_1d_2...d_{k}=p^m}\sum_{\substack{t_i|(\prod_{j=i+1}^kd_{j})\\ i=1,2,...,k}}\frac{\mu(t_1)\dots \mu(t_k)}{\left(t_1...t_{k}\right)^{s}}.\]
Then for $m\ge 1$ and $k=2$ we have
\[I_2=1+m(1-p^{-s}).\]
Now using the identities $\tau_k(p^m)=\sum_{v=0}^m \tau_{k-1}(p^{m-v})$, from which it also follows that
\begin{equation}\label{tau_kminus}\tau_k(p^m)-\tau_{k-1}(p^m)=\tau_k(p^{m-1}),
\end{equation}
 we see that
\begin{align*}
I_k&=\sum_{v=0}^{m}\sum_{d_1d_2...d_{k-1}=p^{m-v}}\sum_{\substack{t_i|(\prod_{j=i+1}^kd_{j})\\ i=1,2,...,k}}\frac{\mu(t_1)\dots \mu(t_k)}{\left(t_1...t_{k}\right)^{s}}\\
&=\sum_{d_1d_2...d_{k-1}=p^{m}}\sum_{\substack{t_i|(\prod_{j=i+1}^{k-1}d_{j})\\ i=1,2,...,k-1}}\frac{\mu(t_1)\dots \mu(t_{k-1})}{\left(t_1...t_{k-1}\right)^{s}}+\sum_{v=1}^{m}\sum_{d_1d_2...d_{k-1}=p^{m-v}}\left(1-\frac{1}{p^s}\right)^{k-1}\\
&=I_{k-1}+\left(1-p^{-s}\right)^{k-1}\sum_{v=1}^{m}\tau_{k-1}(p^{m-v})=I_{k-1}+\left(1-p^{-s}\right)^{k-1}\left(\tau_k(p^m)-\tau_{k-1}(p^m)\right)\\
&=1+m(1-p^{-s})+\sum_{v=3}^{k}\left(1-p^{-s}\right)^{v-1}\tau_{v}(p^{m-1})=\sum_{v=1}^{k}\left(1-p^{-s}\right)^{v-1}\tau_{v}(p^{m-1}).
\end{align*}
Hence
\begin{equation}\label{Phi_v2}
\Phi_k(p^m,s)=p^{-ms}\left(\sum_{1\le v\le k}(1-p^{-s})^{v-1}\tau_{v}(p^{m-1})-\tau_k(p^{m-1})\frac{p^s(1-p^{-s})^{k}}{p-1}\right).
\end{equation}\newline
We now aim to find the value of $\Phi_k(p^m,s)-\Phi_k(p^{m+1},s)$ for each non-negative integer $m$. When $m=1$ we have
\begin{align*}
\Phi_k(1,s)-\Phi_k(p,s)&=1-p^{-s}\left(\sum_{v=1}^{k}(1-p^{-s})^{v-1}\tau_{v}(1)-\tau_k(1)\frac{p^s(1-p^{-s})^{k}}{p-1}\right)\\
&=1-p^{-s}\left(\frac{1-(1-p^{-s})^k}{1-(1-p^{-s})}-\frac{p^s(1-p^{-s})^k}{p-1}\right)\\
&=(1-p^{-s})^k\frac{p}{p-1}=(1-p^{-1})^{-1}(1-p^{-s})^k.
\end{align*}
If $f(z)$ is a formal power series, we denote by $[z^n]f(z)$ the coefficient of $z^n$ in $f(z)$. Then for any $|z|<1$ and $m, v\in\zb_+$ we have
$$\tau_v(p^{m-1})=[z^{m-1}]\left((1-z)^{-v}\right).$$
Since the symbol $[z^n]f(z)$ has a distributive property, we have
\begin{align*}
\phi_k(p^m,s)&:=\frac{1}{p^{ms}}\sum_{v=1}^{k}(1-p^{-s})^{v-1}\tau_v(p^{m-1})\\
&=[z^{m-1}]\left(\frac{1}{p^{ms}}\sum_{v=1}^{k}\frac{(1-p^{-s})^{v-1}}{(1-z)^v}\right)\\
&=[z^{m-1}]\left(\frac{1}{p^{(m-1)s}}\frac{1}{1-p^sz}\left(1-\frac{(1-p^{-s})^{k}}{(1-z)^k}\right)\right)\\
&=1-p^{(1-m)s}(1-p^{-s})^{k}[z^{m-1}]\left((1-p^sz)^{-1}(1-z)^{-k}\right)\\
&=1-(1-p^{-s})^{k}\sum_{0\le \ell\le m-1}p^{-s\ell }[z^{\ell}](1-z)^{-k}\\
&=1-(1-p^{-s})^{k}\left((1-p^{-s})^{-k}-\sum_{\ell\ge m}p^{-s\ell}[z^{\ell}](1-z)^{-k}\right)\\
&=(1-p^{-s})^{k}\sum_{\ell\ge m}p^{-s\ell}[z^{\ell}](1-z)^{-k}=(1-p^{-s})^{k}\sum_{\ell\ge m}p^{-s\ell}\tau_k(p^\ell).
\end{align*}
and then for $m\ge 1$ we get
$$\phi_k(p^m,s)-\phi_k(p^{m+1},s)=(1-p^{-s})^k p^{-ms}\tau_k(p^m).$$
From \eqref{Phi_v2} it follows that when $m\geq 1$ we have
\begin{align*}
\Phi_k(p^m,s)-\Phi_k(p^{m+1},s)=&\left(\phi_k(p^m,s)-\frac{(1-p^{-s})^{k}p^s}{p^{sm}(p-1)}\tau_k(p^{m-1})\right)\\
&-\left(\phi_k(p^{m+1},s)-\frac{(1-p^{-s})^{k}p^s}{p^{s(m+1)}(p-1)}\tau_k(p^{m})\right)\\
=&(1-p^{-s})^kp^{-ms}\left(\tau_{k}(p^m)-\frac{p^s}{p-1}\left(\tau_{k}(p^{m-1})-\frac{\tau_k(p^m)}{p^s}\right)\right)\\
=&\frac{(1-p^{-s})^k}{1-p^{-1}}p^{-ms}\left(\tau_{k}(p^m)-p^{s-1}\tau_{k}(p^{m-1})\right).
\end{align*}
Let $\sigma:=\Re(s)>0$. Then according to \eqref{pree} we have $\Phi_k(p^\ell,s)\rightarrow 0$, as $\ell\rightarrow 0$ and $s$ is fixed. Then after appropriate telescoping summation we can write
\begin{align*}
L_p(s;k,F)&=1+\sum_{\ell\ge 0}(\varrho_F(p^{\ell})-1)(\Phi_k(p^\ell,s)-\Phi_k(p^{\ell+1},s))\\
&=1+\sum_{\ell\ge 1}O\left(p^{1-n/2+\varepsilon}p^{-\ell\sigma}\left(\tau_{k}(p^{\ell})+p^{\sigma-1}\tau_{k}(p^{\ell-1})\right)\right)\end{align*}
Let us further assume that $\sigma>1/2$, so that we obtain
$$L_p(s;k,F)\ll 1+O\left(p^{1-n/2+\varepsilon-\sigma}(1+p^{\sigma-1})\right)=1+O(p^{-n/2+\varepsilon}+p^{1-n/2-\sigma+\varepsilon}).$$
Therefore if $\sigma>\max(1/2,2-n/2)=1/2$, and setting $\tau_{k}(p^{-1}):=0$, we have that the Euler product
$$L(s;k,F)=\prod_{p}\left(\sum_{\ell\ge 0}\frac{\varrho_F(p^{\ell})\left(\tau_{k}(p^{\ell})-p^{s-1}\tau_{k}(p^{\ell-1})\right)}{p^{\ell s}}\right)\left(\frac{(1-p^{-s})^k}{1-p^{-1}}\right)\,,$$
is absolutely convergent. In particular, by \eqref{tau_kminus} we have
$$L(1;k,F)=\prod_{p}\left(\sum_{\ell\ge 0}\frac{\varrho_F(p^{\ell})\tau_{k-1}(p^{\ell})}{p^{\ell}}\right)\left(1-\frac{1}{p}\right)^{k-1}>0\,.$$
Now from $$C_{k,k-1}=\frac{1}{(k-1)!}L(1;k,F)\res_{s=1}\left((s-1)^{k-1}\zeta(s)^k\right)=\frac{L(1;k,F)}{(k-1)!}$$ we conclude that $C_{k,k-1}>0$, which finalizes the proof of Theorem \ref{mth}.


\section{Final remarks}
We believe that the application of the circle method in estimating divisor sums over values of quadratic polynomials can be extended also to the sum
\[\Sigma^\ell_{k,F}(X; {\mathcal{B}}):=\sum_{{\bf x}\in X\mathcal{B}\cap\mathbb{Z}^{d}}\tau^\ell_{k}\left(F({\bf x})\right)\,.\]
The treatment of the sum $S(\alpha)$ remains the same, and one could use a level of distribution result for the function $\tau^\ell_k(m)$ given by Rieger (Satz 3, \cite{Rieger}). In this case a separate treatment for $q$ in the middle range $(\log x)^\lambda\le q\le L\ll X$ might also be required.

\bigskip
\noindent
{\sc Institute of Analysis and Number Theory\\
Graz University of Technology \\
Kopernikusgasse 24/II\\
8010 Graz\\
Austria}\newline
\href{mailto:lapkova@math.tugraz.at}{\small lapkova@math.tugraz.at}

\bigskip
\noindent
{\sc School of Mathematical Sciences\\
East China Normal University\\
500 Dongchuan Road\\
Shanghai 200241\\
PR China}\newline
\href{mailto:nianhongzhou@outlook.com}{\small nianhongzhou@outlook.com}

\end{document}